\newtheorem{theorem}{Theorem}[section]
\newtheorem{lemma}[theorem]{Lemma}
\newtheorem{corollary}[theorem]{Corollary}
\title[Unique continuation of the normal operator and geophysics]{Unique continuation of the normal operator of the X-ray transform and applications in geophysics}
\keywords{Inverse problems, X-ray tomography, normal operator, unique continuation, theoretical seismology.}
\subjclass[2010]{86A22, 44A12, 46F12}
\author{Joonas Ilmavirta}
\thanks{Department of Mathematics and Statistics, University of Jyv\"askyl\"a, P.O. Box 35 (MaD) FI-40014 University of Jyv\"askyl\"a, Finland; \href{mailto:joonas.ilmavirta@jyu.fi}{joonas.ilmavirta@jyu.fi}}
\author{Keijo M\"onkk\"onen}
\thanks{Department of Mathematics and Statistics, University of Jyv\"askyl\"a, P.O. Box 35 (MaD) FI-40014 University of Jyv\"askyl\"a, Finland; \href{mailto:kematamo@student.jyu.fi}{kematamo@student.jyu.fi}}
\date{\today}
\newcommand{\C}{{\mathbb C}}
\newcommand{\R}{{\mathbb R}}
\newcommand{\Z}{{\mathbb Z}}
\newcommand{\N}{{\mathbb N}}
\newcommand{\A}{{\mathcal A}}
\newcommand{\eps}{\varepsilon}
\newcommand{\der}{{\mathrm d}}
\newcommand{\Kelvin}{K}%Kelvin transform
\newcommand{\riesz}{I_{\alpha}}%Riesz potential
\newcommand{\xrt}{X}%X-ray transform
\newcommand{\no}{N}%normal operator
\newcommand{\schwartz}{\mathscr{S}}
\newcommand{\tempered}{\mathscr{S}^{\prime}}
\newcommand{\rapidly}{\mathscr{O}_C^{\prime}}
\newcommand{\slowly}{\mathscr{O}_M}
\newcommand{\csmooth}{\mathcal{D}}%compactly supported smooth function
\newcommand{\smooth}{\mathcal{E}}%smooth function
\newcommand{\cdistr}{\mathcal{E}'}%compactly supported distribution
\newcommand{\distr}{\mathcal{D}^{\prime}}%distribution
\newcommand{\dimens}{d}%dimension
\newcommand{\kernel}{h_{\alpha}} %kernel of the Riesz potential
\newcommand{\abs}[1]{\left\lvert #1 \right\rvert}%absolute value
\newcommand{\ip}[2]{\left\langle #1,#2 \right\rangle}%inner product or duality pairing
\DeclareMathOperator{\spt}{spt}%support
\DeclareMathOperator{\ch}{ch}%convex hull
\newcommand{\NTR}[1]{}
\begin{document}

\maketitle

\begin{abstract} 
We show that the normal operator of the X-ray transform in $\mathbb{R}^d$, $d\geq 2$, has a unique continuation property in the class of compactly supported distributions. This immediately implies uniqueness for the X-ray tomography problem with partial data and generalizes some earlier results to higher dimensions. Our proof also gives a unique continuation property for certain Riesz potentials in the space of rapidly decreasing distributions. We present applications to local and global seismology. These include linearized travel time tomography with half-local data and global tomography based on shear wave splitting in a weakly anisotropic elastic medium. 
\end{abstract}
\NTR{Removed ``generalizes earlier results to lower regularity" from abstract since similar result was already proved in \cite{KKW-stability-of-interior-problems} and mentioned in \cite{KEQ-wavelet-methods-ROI-tomography} when $\dimens=2$. Added rapidly decreasing distributions to the abstract.}

\section{Introduction}

Linearized travel time tomography of shear waves reduces mathematically to a version of the X-ray tomography problem under a suitable model.
We are interested in shear wave splitting of waves travelling through the mantle, leading us to a partial data problem.
The partial data problem of the X-ray transform can then be reduced to a unique continuation problem of the normal operator of the X-ray transform. We study the unique continuation property of the normal operator mathematically and apply it to show that our partial data problems arising from geophysics have unique solutions.\NTR{Added a paragraph clarifying the relevance of our result to geophysics.}

Consider the following X-ray tomography problem with partial data. Assume we have a compactly supported function or distribution~$f$ on~$\R^{\dimens}$, $\dimens\geq 2$, and an open set $V\subset\R^{\dimens}$.
Suppose we only know the integrals of~$f$ over the lines through~$V$ and the values of~$f$ in~$V$.
Does this information determine~$f$ uniquely?
In terms of the X-ray transform $\xrt$, if $\xrt f(\gamma)=0$ for all lines~$\gamma$ intersecting~$V$ and $f|_V=0$, is it true that $f=0$? The answer is positive and even more is true.

The partial data problem can be recast into a unique continuation problem of the normal operator $\no=\xrt^*\xrt$ of the X-ray transform.
\NTR{Minor rephrasing.}
In other words, if $\no f|_V=0$ and $f|_V=0$, does it imply that $f=0$?
The answer is `yes', and we prove a stronger unique continuation property for~$N$ where we only require that~$\no f$ vanishes to infinite order at some point in~$V$.
The proof also applies to some Riesz potentials of rapidly decreasing distributions. As a corollary we get the uniqueness result for the X-ray tomography problem with partial data.\NTR{Corrected ``$\no$ vanishes to infinite order" to ``$\no f$ vanishes to infinite order". Added comment about rapidly decreasing distributions.}

It is well known that the partial data problem or region of interest (ROI) problem has important applications in medical imaging (see e.g.~\cite{KKW-stability-of-interior-problems, KEQ-wavelet-methods-ROI-tomography, NA-mathematics-computerized-tomography, YYW-interior-reconstruction-limited-angle-data, YW-compressed-interior-tomography}).
We introduce two possibly new applications in theoretical seismology.
Namely, we show that one can uniquely solve a linearized travel time problem with receivers only in a small open subset of the Earth's surface.
In addition, we describe how to use shear wave (S-wave) splitting measurements to determine the difference of the S-wave speeds.
See section \ref{sec:applications} for details on these applications.\NTR{Added references.}

Similar partial data results are known in~$\R^2$ for compactly supported smooth functions, compactly supported $L^1$-functions and compactly supported distributions~\cite{CNDK-solving-interior-problem-ct-with-apriori-knowledge, KKW-stability-of-interior-problems,KEQ-wavelet-methods-ROI-tomography}.
Our method of proof applies to all dimensions $\dimens\geq 2$.
An important novelty is in looking at the partial data result from the point of view of unique continuation of the normal operator.
The theorem can be seen as a complementary result to the Helgason support theorem (see lemma \ref{lemma:helgason}) where one requires that the lines do not intersect the set in question. Our result can also be seen as a unique continuation property for the inverse operator of the fractional Laplacian~$(-\Delta)^s$.\NTR{Minor modifications and added references.}

We present two alternative proofs for the partial data problem.
The first proof uses the unique continuation property of the normal operator of the X-ray transform.
The second proof is more direct and uses spherical symmetry.
However, both proofs rely on a similar idea, differentiation of an integral kernel and density of polynomials. We also present an alternative proof for the unique continuation of the Riesz potential which is based on unique continuation of the fractional Laplacian.\NTR{Added comment about the alternative proof for unique continuation of Riesz potential.}

\subsection{The main results}

Denote by $\csmooth(\R^{\dimens})$ the set of compactly supported smooth functions and by\NTR{Added preposition.} $\distr(\R^{\dimens})$ the space of all distributions in~$\R^{\dimens}$, $\dimens\geq 2$.
Also denote by $\cdistr(\R^{\dimens})$ the set of compactly supported distributions in~$\R^{\dimens}$.
Let $\alpha=\dimens-1$ or $\alpha\in\R\setminus\Z$ and $\alpha<\dimens$.
We define the Riesz potential $\riesz f=f\ast\kernel$ for $f\in\cdistr(\R^{\dimens})$ where $\kernel(x)=\abs{x}^{-\alpha}$ and the convolution is understood in the sense of distributions.
If $\alpha=\dimens-1$, then~$\riesz$ reduces to the normal operator of the X-ray transform up to a constant factor 2.
We say that $\riesz f$ vanishes to infinite order at a point $x_0$ if $\partial^{\beta}(\riesz f)(x_0)=0$ for all $\beta\in\N^\dimens$.
Our main result is the following (see also theorem~\ref{thm:alternativeconvolutionproof} and theorem~\ref{thm:alternativeproofrieszpotential}).\NTR{Added references for the alternative proofs.}

\begin{theorem}
\label{thm:maintheoremfornormaloperator}
Let $f\in\cdistr(\R^{\dimens})$, $V\subset\R^{\dimens}$ any nonempty open set and $x_0\in V$. If $f|_V=0$ and $\riesz f$ vanishes to infinite order at $x_0$, then $f=0$. In particular, this holds for the normal operator of the X-ray transform.
\end{theorem}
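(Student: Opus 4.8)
The plan is to exploit the explicit convolution structure $\riesz f = f \ast \kernel$ with $\kernel(x) = \abs{x}^{-\alpha}$ together with the two hypotheses: that $f$ vanishes on the open set $V$, and that $\riesz f$ vanishes to infinite order at the point $x_0 \in V$. The strategy is to translate the infinite-order vanishing of $\riesz f$ at $x_0$ into an infinite family of moment-type conditions on $f$, and then use these to conclude that $f$ must vanish identically.

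First I would set up the computation of the derivatives $\partial^\beta(\riesz f)(x_0)$. Writing $\riesz f(x) = \ip{f}{\kernel(x-\cdot)}$ (interpreting the pairing of the compactly supported distribution $f$ with the kernel), I would differentiate under the distributional pairing to get $\partial^\beta(\riesz f)(x_0) = \ip{f}{(\partial^\beta \kernel)(x_0 - \cdot)}$. The key point is that since $x_0 \in V$ and $f$ vanishes on the open neighborhood $V$ of $x_0$, the \emph{singularity} of the kernel $\abs{x_0 - y}^{-\alpha}$ at $y = x_0$ is harmless: on the support of $f$ we stay away from $x_0$ (at least away from a neighborhood of it), so $\kernel(x_0 - \cdot)$ and all its derivatives restrict to smooth functions near $\spt f$. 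Thus the infinite-order vanishing at $x_0$ says that $f$ is orthogonal to every function in the family $\{\partial^\beta \kernel(x_0 - \cdot)\}_{\beta \in \N^\dimens}$, restricted to a neighborhood of $\spt f$.

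The heart of the argument, as the introduction hints (``density of polynomials''), is to show that this orthogonality forces $f = 0$. I would expand the kernel $\abs{x_0 - y}^{-\alpha}$ about $x_0$ and argue that the finite linear combinations of $\{\partial^\beta \kernel(x_0 - \cdot)\}$ — equivalently, products of polynomials with the radial profile $\abs{x_0 - y}^{-\alpha - 2k}$ arising from repeated differentiation of $\abs{\cdot}^{-\alpha}$ — are dense in a suitable space of test functions on a region containing $\spt f$ but excluding $x_0$. Since $\riesz f$ is real-analytic away from $\spt f$ (being the convolution of a compactly supported distribution with a function analytic off the origin), the infinite-order vanishing at $x_0$ actually propagates: $\riesz f$ vanishes on the whole connected component of $\R^\dimens \setminus \spt f$ containing $x_0$. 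Combined with $f|_V = 0$, I would then invoke the Helgason-type support theorem (lemma~\ref{lemma:helgason}) or a direct unique continuation argument for the fractional Laplacian to conclude $f = 0$ everywhere.

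The main obstacle I anticipate is the density/analyticity step — precisely making rigorous the passage from ``all derivatives vanish at the single point $x_0$'' to ``$\riesz f$ vanishes on an open set,'' and then to ``$f = 0$.'' The subtlety is that infinite-order vanishing at one point is a weaker hypothesis than vanishing on an open set, so one genuinely needs the real-analyticity of $\riesz f$ on $\R^\dimens \setminus \spt f$ to upgrade pointwise jet-vanishing to local vanishing. Establishing this analyticity rigorously for distributional $f$, and handling the fractional values $\alpha \in \R \setminus \Z$ (where $\kernel$ involves genuinely non-integer homogeneity and the relevant identity with $(-\Delta)^s$ must be invoked carefully), is where the technical care will be required; the final inversion of the fractional Laplacian or the appeal to the support theorem should then be comparatively routine.
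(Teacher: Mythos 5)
Your core route is correct, but it is genuinely different from the paper's main proof, and it is worth making the comparison explicit. The paper never leaves the single point $x_0$: it smooths the kernel away from $\spt(f)$, proves an explicit inductive lemma (lemma~\ref{lemma:generatingpolynomials}) showing that every function $p(\Kelvin(x))\kernel(x)$, $p$ a polynomial, is a finite linear combination of derivatives of $\kernel$, pulls back by the Kelvin transform $\Kelvin(x)=x\abs{x}^{-2}$ so that these become polynomials on a bounded annulus, and concludes with density of polynomials in $\smooth$ (lemma~\ref{lemma:polynomialsaredense}). That argument is elementary, self-contained, and covers every admissible $\alpha$, including negative non-integer values. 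You instead upgrade the hypothesis: since $\kernel$ is real-analytic off the origin, $\riesz f=f\ast\kernel$ is real-analytic on $\R^\dimens\setminus\spt(f)$, so infinite-order vanishing at $x_0$ propagates to $\riesz f\equiv 0$ on the whole connected component $U$ of $\R^\dimens\setminus\spt(f)$ containing $x_0$; on $U$ one also has $f=0$, and the unique continuation property of the fractional Laplacian (writing $\riesz f=c(-\Delta)^{(\alpha-\dimens)/2}f$ and applying the local operator $(-\Delta)^k$ to land at $(-\Delta)^s$ with $s\in(0,1)$) gives $f=0$. This second half is precisely the paper's own alternative proof (theorem~\ref{thm:alternativeproofrieszpotential}), and your analyticity bridge is a genuine addition to it: the paper's alternative theorems assume $\riesz f|_V=0$ on an open set, whereas your observation recovers that from jet-vanishing at a single point. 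What the paper's main proof buys is elementarity and the full range of $\alpha$; what yours buys is brevity, at the cost of importing the Carleman-estimate-based result of \cite{GSU-calderon-problem-fractional-schrodinger}.

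Three concrete corrections. First, drop the appeal to Helgason's support theorem (lemma~\ref{lemma:helgason}): it requires vanishing of the X-ray transform on lines avoiding a convex compact set, and nothing in your situation converts the vanishing of $\riesz f$ (equivalently of $\no f$) on an open set into line-integral data, so only the fractional-Laplacian option actually closes the argument. Second, your parallel ``density'' sub-plan --- that finite linear combinations of $\{\partial^{\beta}\kernel(x_0-\cdot)\}$ are dense in an appropriate space of test functions near $\spt(f)$ --- is exactly where the paper's real work lies; as written it is only an assertion, and making it rigorous requires the algebraic lemma~\ref{lemma:generatingpolynomials} together with the Kelvin change of variables, neither of which appears in your sketch. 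Since your analyticity route does not need this claim, it should be excised rather than patched. Third, the identities $\widehat{\kernel}=c\abs{\cdot}^{\alpha-\dimens}$ and $\riesz f=c(-\Delta)^{(\alpha-\dimens)/2}f$ are classical only for $0<\alpha<\dimens$ (which does cover the normal operator, $\alpha=\dimens-1$, so your proof does establish the headline case); theorem~\ref{thm:maintheoremfornormaloperator} also allows negative non-integer $\alpha$, where $\abs{\cdot}^{\alpha-\dimens}$ fails to be locally integrable and must be interpreted as a homogeneous (finite-part) distribution, so your route needs either that extra regularization work or an explicit restriction to $\alpha>0$.
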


The condition $f|_V=0$ guarantees that $\riesz f$ is smooth in a neighborhood of~$x_0$.
The pointwise derivatives $\partial^{\beta}(\riesz f)(x_0)$ therefore exist, see the proof of theorem \ref{thm:maintheoremfornormaloperator} for details.
The condition of vanishing derivatives at a point only makes sense under the assumption that $f$ vanishes (or is smooth) in $V$.\NTR{Minor modifications and explanation why $\riesz f$ is smooth in $V$.}

Theorem~\ref{thm:maintheoremfornormaloperator} can be seen as a unique continuation property of the Riesz potential~$\riesz$.
The result resembles a strong unique continuation property but the roles in the decay conditions are interchanged.
As an immediate corollary we obtain the following partial data results for the X-ray tomography problem.
The first one is similar compared to the uniqueness results in~\cite{KKW-stability-of-interior-problems, KEQ-wavelet-methods-ROI-tomography}.
For the definition of the X-ray transform on distributions, see section~\ref{sec:integralgeometryanddistributions}.\NTR{Removed ``..is a generalization.." since similar results are proved in \cite{KKW-stability-of-interior-problems, KEQ-wavelet-methods-ROI-tomography} for $f\in\cdistr(\R^2)$.}

\begin{theorem}
\label{thm:partialdataproblem}
Let $V\subset\R^{\dimens}$ be any nonempty open set. If $f\in\cdistr(\R^{\dimens})$ satisfies $f|_V=0$ and $\xrt f$ vanishes on all lines that intersect $V$, then $f=0$.
\end{theorem}

\begin{corollary}
\label{cor:uniquenessinannulus}
Let $R>r>0$ and $f\in\cdistr(\R^{\dimens})$ such that $\spt(f)\subset \overline{B}(0, R)\setminus B(0, r)$.
If $\xrt f$ vanishes on all lines that intersect $B(0, r)$, then $f=0$.
\end{corollary}

\begin{corollary}
\label{cor:applicationstoseismicarrays}
Let $\Omega\subset\R^{\dimens}$ be a bounded, smooth and strictly convex set and $\Sigma\subset\partial\Omega$ any nonempty open subset of its boundary.
If $f\in\cdistr(\R^{\dimens})$ is supported in~$\overline{\Omega}$ and its X-ray transform vanishes on all lines that meet~$\Sigma$, then $f=0$.
\end{corollary}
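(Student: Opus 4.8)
The plan is to reduce the statement to Theorem~\ref{thm:partialdataproblem} by producing a nonempty open set $V$ on which both hypotheses of that theorem hold. Fix a point $p\in\Sigma$. Since $\spt(f)\subset\overline{\Omega}$, any open set $V$ with $V\cap\overline{\Omega}=\emptyset$ automatically satisfies $f|_V=0$, so the entire difficulty lies in placing such a $V$ just outside the boundary near $p$ in a way that controls the lines passing through it.

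The geometric heart of the argument is the following claim: there is a nonempty open set $V\subset\R^{\dimens}\setminus\overline{\Omega}$ such that every line meeting both $V$ and $\overline{\Omega}$ also meets $\Sigma$. Granting the claim, the proof finishes quickly. Let $\gamma$ be any line that intersects $V$. If $\gamma$ does not meet $\overline{\Omega}$, then $\xrt f(\gamma)=0$ because $f$ is supported in $\overline{\Omega}$; if $\gamma$ does meet $\overline{\Omega}$, then by the claim it meets $\Sigma$, so $\xrt f(\gamma)=0$ by hypothesis. Thus $\xrt f$ vanishes on every line intersecting $V$, while $f|_V=0$, and Theorem~\ref{thm:partialdataproblem} yields $f=0$.

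To prove the claim I would use strict convexity quantitatively. Let $\nu$ be the outer unit normal to $\partial\Omega$ at $p$ and set $q_t=p+t\nu$ for small $t>0$, so that $q_t\notin\overline{\Omega}$. From the external point $q_t$ the lines meeting $\overline{\Omega}$ form a convex cone whose boundary rays are tangent to $\partial\Omega$; the tangency points constitute the visible horizon of $\Omega$ seen from $q_t$, and every line through $q_t$ that meets $\overline{\Omega}$ enters through the boundary cap bounded by this horizon. By strict convexity the supporting hyperplane at $p$ touches $\overline{\Omega}$ only at $p$, and a continuity and compactness argument shows that as $t\to 0^+$ the horizon, and hence the whole entry cap, shrinks to the single point $p$. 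Since $\Sigma$ is open in $\partial\Omega$, the cap is contained in $\Sigma$ once $t$ is small enough, and taking $V$ to be a sufficiently small ball around $q_t$ preserves this property for all lines intersecting $V$ by the same continuity. I expect this horizon-shrinking estimate to be the main obstacle, since it is precisely where smoothness and \emph{strict} convexity (as opposed to mere convexity) are essential: without strict convexity the boundary could contain a flat piece through which far-away lines enter $\overline{\Omega}$, destroying the containment of the entry cap in $\Sigma$.
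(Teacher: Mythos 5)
Your proof is correct, but it follows a genuinely different route from the paper's. The paper keeps $V$ \emph{inside} $\Omega$: after passing to a connected component of $\Sigma$, it invokes Helgason's support theorem (lemma~\ref{lemma:helgason}) to conclude that $f$ vanishes on the cap region $\ch(\Sigma)$, and then any nonempty open $V\subset\ch(\Sigma)$ satisfies both hypotheses of theorem~\ref{thm:partialdataproblem}, since every line through such a $V$ exits $\overline{\Omega}$ through a point of $\Sigma$. You instead place $V$ \emph{outside} $\overline{\Omega}$, where $f|_V=0$ comes for free from the support assumption, and you replace the deep integral-geometric input (Helgason) by an elementary convexity argument: the visible cap of $\partial\Omega$ seen from $q_t=p+t\nu$ shrinks to $\{p\}$ as $t\to 0^+$. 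Your sketch of that claim is sound and closes by compactness: the closed visible set from an exterior point $q$ is $\{x\in\partial\Omega:\nu(x)\cdot(q-x)\ge 0\}$, so if $x_n$ were visible from $q_{t_n}$ with $t_n\to 0$ and $x_n\to x\ne p$, then $\nu(x)\cdot(p-x)\ge 0$, contradicting strict convexity; the same limiting argument gives stability when $q_t$ is replaced by a small ball $V$. The trade-off is clear: the paper's proof is two lines long but rests on a substantial support theorem, while yours is self-contained, purely geometric, and completely local at one point of $\Sigma$ (connectedness of $\Sigma$ never enters).

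One caveat, which you share with the paper rather than introduce: because your $V$ is exterior, the set $\Gamma_V$ of lines through $V$ unavoidably contains the horizon lines, tangent to $\partial\Omega$ at points of $\Sigma$. These lines meet $\Sigma$, but they are limits of lines missing $\overline{\Omega}$, hence not interior points of the set of lines meeting $\Sigma$; so for a distribution $f$ whose support may reach $\partial\Omega$, your dichotomy establishes the distributional vanishing $\xrt f|_{\Gamma_V}=0$ only off the closed set of tangent lines, unless the hypothesis ``$\xrt f$ vanishes on all lines meeting $\Sigma$'' is read as vanishing on a set of lines that is open around such tangent lines. The paper's interior choice of $V$ sidesteps this exact point in its application of theorem~\ref{thm:partialdataproblem} (a line through an interior point of $\Omega$ is never tangent to $\partial\Omega$), but the identical boundary case is implicit in its Helgason step, where one needs $\xrt f$ to vanish on the open set of lines avoiding a convex hull of $\partial\Omega\setminus\Sigma$, a set which again contains tangent lines at points of $\Sigma$. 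So this is an interpretational subtlety of the corollary's hypothesis for distributions, not a defect of your argument relative to the paper's.
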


Proofs of the theorems and corollaries can be found in section~\ref{subsec:proofsofmainresults} (see also the alternative proofs in section~\ref{sec:alternativeproofs}).
Some of our assumptions are crucial for the theorems to be true.
Theorem~\ref{thm:partialdataproblem} is clearly false if $\dimens=1$.
The function~$f$ cannot be determined from its integrals over the lines through the ROI only \cite{KEQ-wavelet-methods-ROI-tomography, NA-mathematics-computerized-tomography, SU:microlocal-analysis-integral-geometry}.
Thus one needs some information of~$f$ in the open set~$V$ which the lines all meet. Especially we need the assumption $f|_V=0$ when we use the Kelvin transform and density of polynomials. Our proof also exploits the assumption of compact support which is motivated by the physical setting and is needed to define the Riesz potential on distributions. However, one can relax that assumption to rapid decay at infinity (see theorem \ref{thm:alternativeconvolutionproof} and theorem \ref{thm:alternativeproofrieszpotential}). Theorem~\ref{thm:partialdataproblem} and corollaries~\ref{cor:uniquenessinannulus} and~\ref{cor:applicationstoseismicarrays} have important applications in theoretical seismology and medical imaging.
This is discussed in more depth in the next section.\NTR{Minor modifications, added references and mentioned why we need the assumption $f|_V=0$.} 

\subsection{Applications}
\label{sec:applications}

Our results have theoretical applications in seismology.
Applications include linearization of anisotropies in S-wave splitting and linearized travel time tomography. Even though there exist many different types of seismic data, we only use linearized travel time data without reflections in our models. For the following treatment of splitting of S-waves we refer to \cite{CL-decade-of-shear-wave-splitting, LS-shearwavesplitting-and-mantle-anisotropy, MP-wave-propagation-anisotropic-media, SA-seismic-anisotropy-and-mantle-deformation, SHE-introduction-to-seismology}.\NTR{Clarified what kind of seismic data we use in our models.}

In linear elasticity in~$\R^3$ there are three polarizations of seismic waves which correspond to the eigenvectors of the symmetric Christoffel matrix.
The eigenvalues correspond to wave speeds.
In the isotropic case the largest eigenvalue is simple with the eigenvector parallel to the direction of propagation, corresponding to a P-wave.
The other eigenvalue is degenerate with eigenvectors orthogonal to the P-wave polarization. These eigenvectors correspond to S-waves.
In anisotropic medium this degeneracy is typically lost and the degenerate S polarization splits to two quasi-S (qS) polarizations.
The data in the imaging method based on S-wave splitting is the arrival time difference between the two qS-waves.

One common type\NTR{Rephrased.} of anisotropy is hexagonally symmetric anisotropy.
This means that there is a preferred direction or a symmetry axis and the velocities vary only with the angle from the axis, i.e. there is rotational symmetry.
For example sedimentary layering and aligned crystals or cracks can cause hexagonal anisotropy.
If the seismic wavelength is substantially larger than the layer or crack spacing, then the material appears to be anisotropic~\cite{BA-long-waves-anisotropy-by-layering}.
The widely used one-dimensional Preliminary Reference Earth Model (PREM) indicates this kind of anisotropy between the depths 80--220 km in the upper mantle \cite{DA-PREM-model, SHE-introduction-to-seismology}.
In the PREM-model the symmetry axis is radial and all the physical parameters of the Earth depend only on the depth.
Anisotropies have also been observed in the shallow crust and in the inner core where the fastest direction is parallel to the rotation axis of the Earth \cite{CRE-anisotropy-inner-core, SHE-introduction-to-seismology}.

Our results pertain to so-called weak anisotropy, where we consider the anisotropy as a small perturbation to an isotropic reference model.
In the isotropic background model S-waves have a speed~$c_0(x)$ for all directions and polarizations.
When we add a small anisotropic perturbation, the speeds become\NTR{Fixed a typo.} $c_i(x, v)=c_0(x)+\delta c_i(x, v)$, $i=1, 2$.
Here $v\in S^2$ is the direction of propagation of the wave.
In the linearized regime $\abs{\delta c_i}\ll \abs{c_0}$ we have 
\begin{equation}
\frac{1}{c_i(x, v)}=\frac{1}{c_0(x)+\delta c_i(x, v)}\approx \frac{1}{c_0(x)}-\frac{\delta c_i(x, v)}{c_0^2(x)}.
\end{equation}
If we only measure small differences in the arrival times, our data is roughly
\begin{equation}
\label{eq:traveltimedifference}
\delta t\approx\int_{\gamma}\frac{\der s}{c_1(x, v)}-\int_{\gamma}\frac{\der s}{c_2(x, v)}\approx \int_{\gamma}\frac{\delta c_2(x, v)-\delta c_1(x, v)}{c_0^2(x)}\der s.
\end{equation}
Thus upon linearization, the data is the X-ray transform of $c_0^{-2}(\delta c_2-\delta c_1)$.
To simplify this problem, we assume the function to depend on~$x$ but not on~$v$.
If the splitting occurs in a layer near the surface (see figure~\ref{fig:datainsplitting}), we are in the setting of corollary~\ref{cor:uniquenessinannulus}.
The corollary implies that the linearized shear wave splitting data determines $\delta c_2-\delta c_1$ and thus $c_2-c_1$ uniquely in the outermost layer.\NTR{Emphasized that we uniquely recover the difference of anisotropies in the outermost layer.}

\begin{figure}[htp]
\centering
\includegraphics[height=7cm]{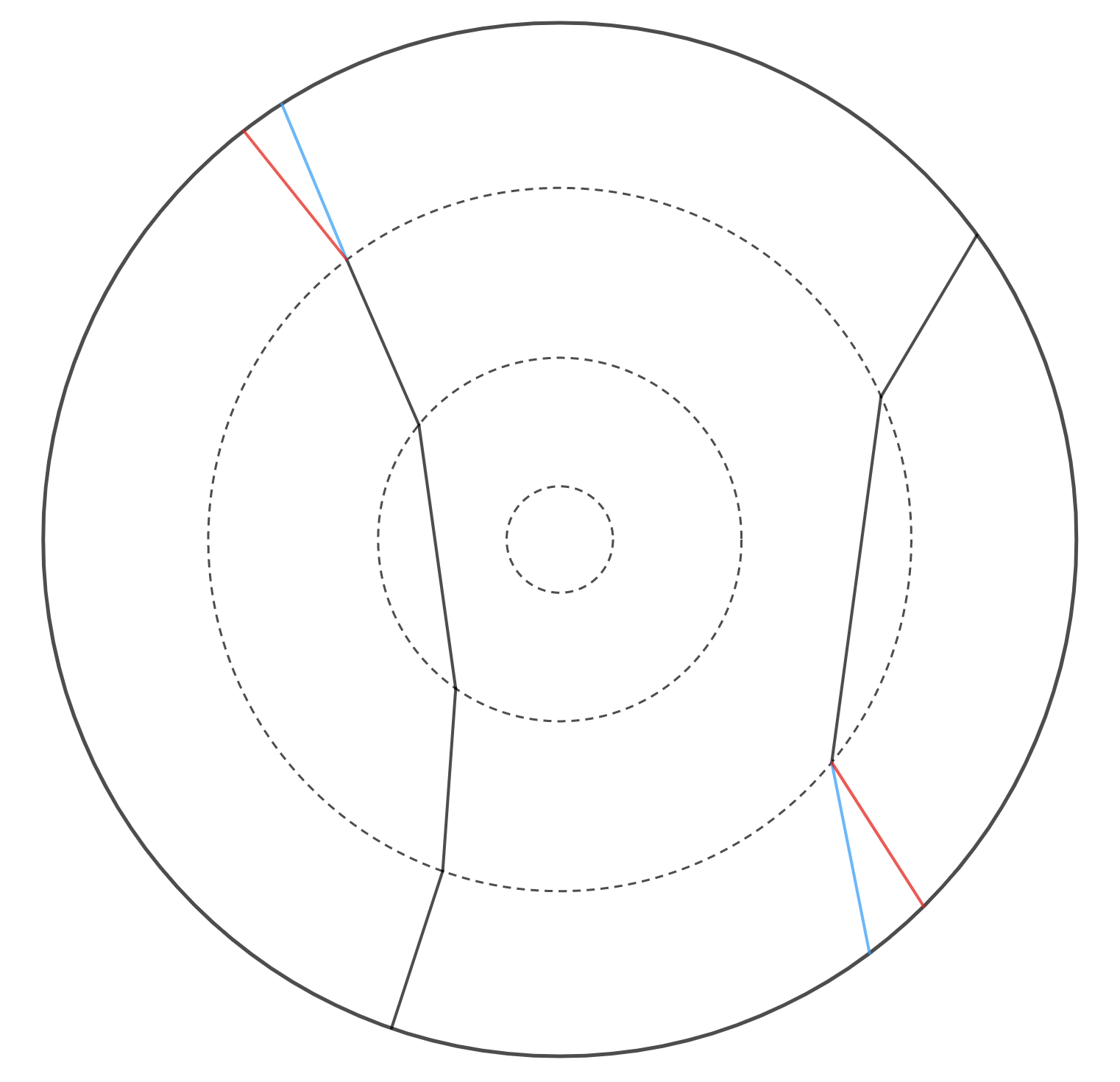}
\caption{A highly simplified picture of the setting in the linearized model. The splitting can occur at every interface but we only care about the splitting near the surface with smallest difference in the arrival times. There may exist different polarization states during the propagation of the initial wave, we only assume that the second to last part is an S-type wave. Our data consists purely of the branched parts of the waves.}
\label{fig:datainsplitting}
\end{figure}

Travel time tomography has a close relationship to the boundary rigidity problem where the aim is to reconstruct the metric of a manifold from boundary distance measurements \cite{SUVZ-travel-time-tomography, UZ-journey-to-the-center-of-the-earth}.
In seismology these distances correspond to travel times of seismic waves which are assumed to propagate along geodesics or straightest possible paths in the manifold.
This problem is highly nonlinear and difficult to solve in full generality.
Thus it is relevant to consider the first-order approximation and linearize the problem.
When we linearize the general travel time tomography problem assuming our manifold to be~$\R^\dimens$ and that the variations in the metric are conformally Euclidean, the geodesics become lines and the problem reduces to the X-ray tomography problem of a scalar function.

Linearized travel time tomography motivates the following application of observing earthquakes by seismic arrays on the surface of the Earth.
In the context of corollary~\ref{cor:applicationstoseismicarrays} one can ideally think that some open set of the surface is covered densely by seismometers (see figure~\ref{fig:seismicarrays}).
One detects earthquakes only in this set and measures travel times of seismic waves originating anywhere on the surface.
In geometrical terms, our geodesics have one endpoint in this open set and the other endpoint can freely vary.
In contrast to ``local data" where both endpoints are in the small set, we call this setting ``half-local data".
The interesting question then is whether this limited set of travel time data can determine the inner structure of the Earth uniquely.
When we do the usual conformal linearization in the Euclidean background, we end up with partial X-ray tomography problem of a scalar function.
Corollary~\ref{cor:applicationstoseismicarrays} then tells that in principle one can use these kind of seismic arrays to uniquely determine the conformal factor in the linearization.

\begin{figure}[htp]
\centering
\includegraphics[width=8cm]{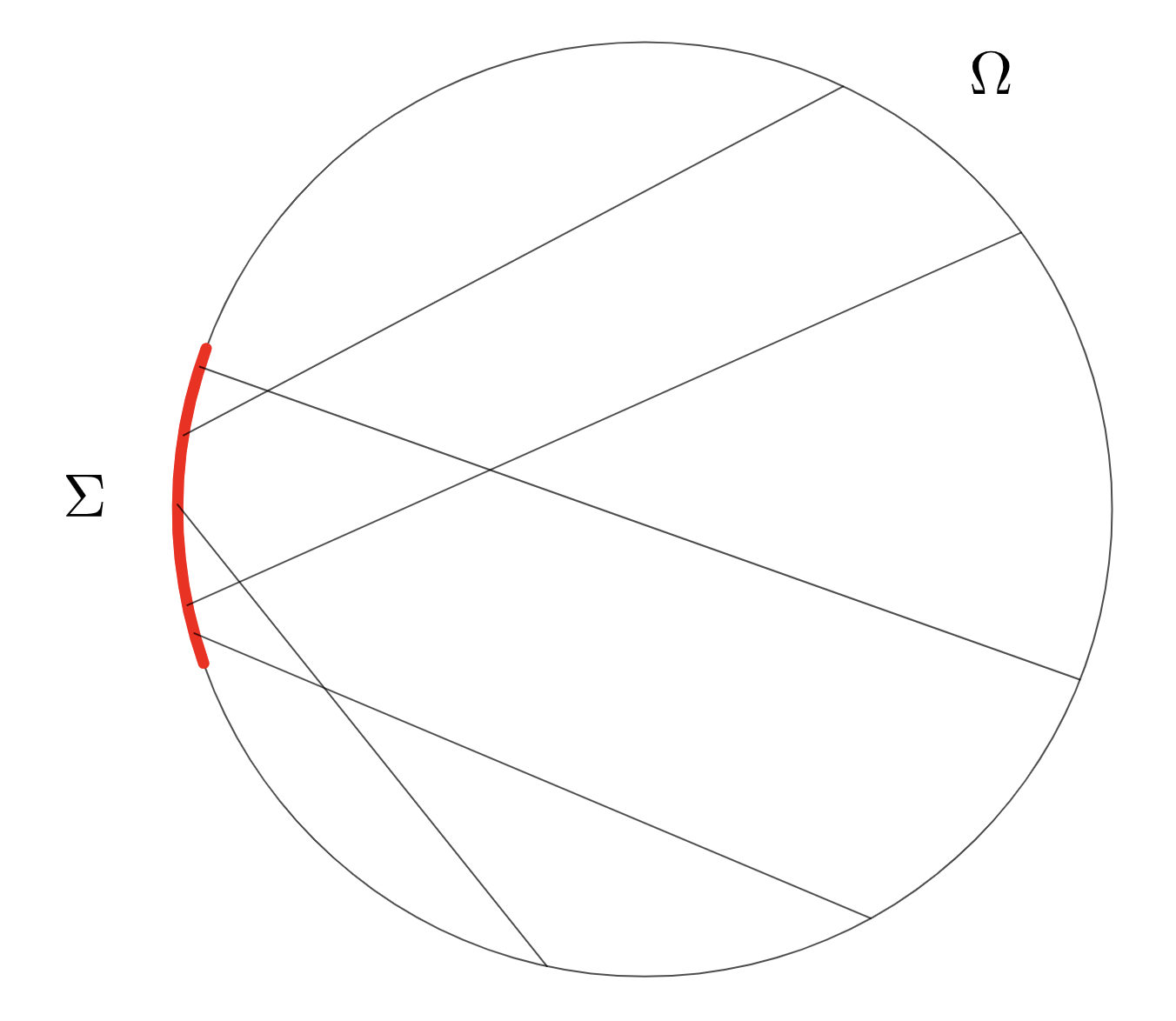}
\caption{The setting as in corollary~\ref{cor:applicationstoseismicarrays}. Here~$\Sigma$ (\textcolor{red}{thick}) represents the seismic array where one measures the travel times of seismic waves and~$\Omega$ represents the Earth.}
\label{fig:seismicarrays}
\end{figure}\NTR{Changed the symbol of the boundary set $\Gamma\rightarrow\Sigma$ to match with corollary \ref{cor:applicationstoseismicarrays}.}

In addition to theoretical seismology one important application is medical imaging, see~\cite{KKW-stability-of-interior-problems, KEQ-wavelet-methods-ROI-tomography, NA-mathematics-computerized-tomography, YYW-interior-reconstruction-limited-angle-data, YW-compressed-interior-tomography} and the references therein. Suppose we want to reconstruct a specific part of the human body, a region of interest (ROI).
Is it possible to reconstruct the image by shooting X-rays only through the ROI?
If this was possible it would be unnecessary to give a higher dose of X-rays to the patient and radiate regions outside the ROI which do not contribute significantly to the image.
We can interpret the function~$f$ in theorem~\ref{thm:partialdataproblem} as the attenuation of X-rays which to a good approximation travel along straight lines inside a body.
Somehow surprisingly theorem~\ref{thm:partialdataproblem} tells us that if we know the values of~$f$ in a small open set inside the ROI and the integrals of~$f$ over the lines going through the ROI, then~$f$ is uniquely determined everywhere (see figure~\ref{fig:roitomography}).
It is important to note that arbitrary attenuation cannot be determined from the line integrals only even in the ROI but one can always recover the singularities in the ROI~\cite{KEQ-wavelet-methods-ROI-tomography, NA-mathematics-computerized-tomography, SU:microlocal-analysis-integral-geometry}.\NTR{Added references.}

\begin{figure}[htp]
\centering
\includegraphics[width=7cm]{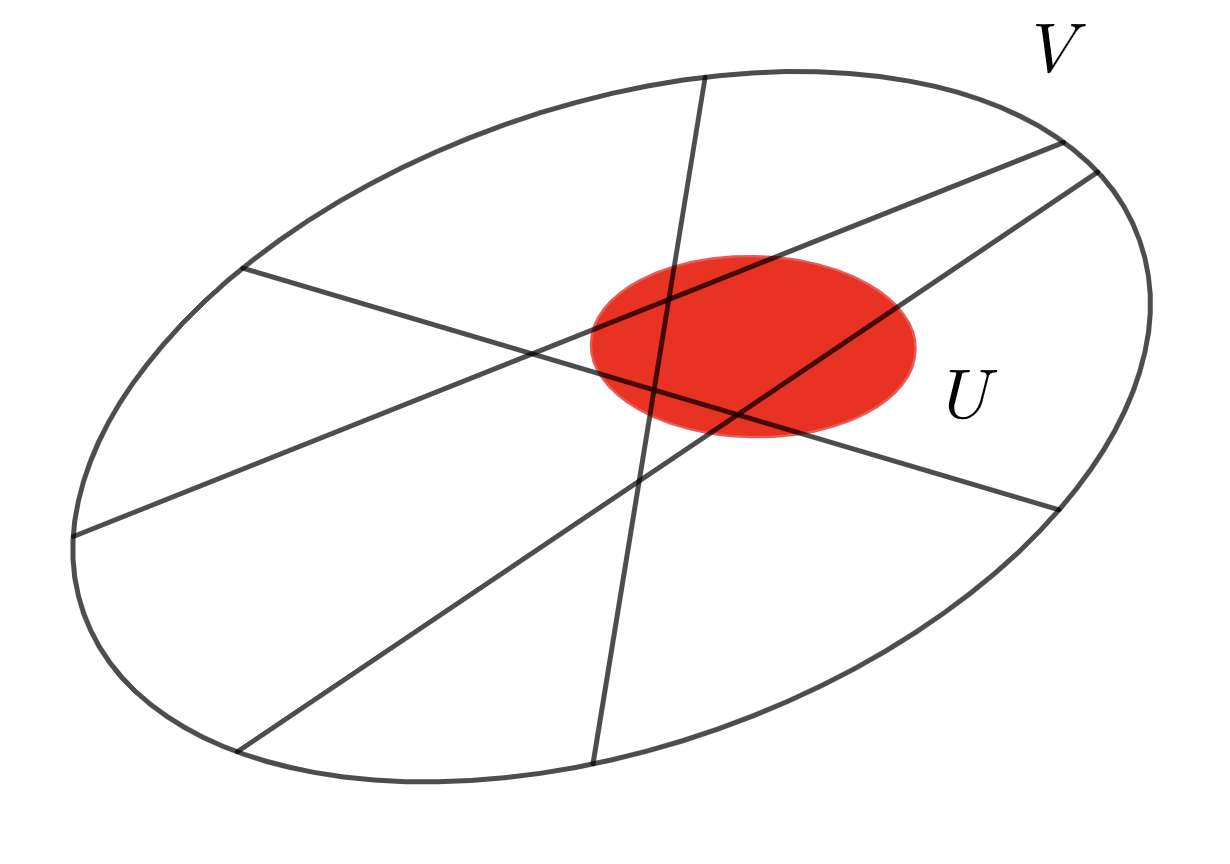}
\caption{Basic idea of ROI-tomography in the context of theorem~\ref{thm:partialdataproblem}. Here~$V$ is the region of interest and $U\subset V$ some open subset. If one knows the attenuation~$f$ in~$U$ and the integrals of~$f$ over the lines through~$V$, then one can construct~$f$ uniquely from the data.}
\label{fig:roitomography}
\end{figure}

\subsection{Related results}
The partial data problem for the X-ray transform has been solved earlier in~$\R^2$ under a variety of assumptions \cite{CNDK-solving-interior-problem-ct-with-apriori-knowledge, KKW-stability-of-interior-problems, KEQ-wavelet-methods-ROI-tomography, YYJW-high-order-TV-minimization}. The uniqueness result is known for $C_c^{\infty}$-functions and compactly supported $L^1$-functions if one assumes the knowledge of $f$ inside an open set in the ROI \cite{CNDK-solving-interior-problem-ct-with-apriori-knowledge, KEQ-wavelet-methods-ROI-tomography}. One also obtains uniqueness without knowing the exact values of~$f$ in the ROI; if $f$ is piecewise constant or piecewise polynomial in the ROI, then the X-ray data determines $f$ uniquely~\cite{KEQ-wavelet-methods-ROI-tomography, YYJW-high-order-TV-minimization}. If $f$ is polynomial in the ROI, then one obtains stability as well~\cite{KKW-stability-of-interior-problems}. Closest to our theorem is the uniqueness result in~\cite{KKW-stability-of-interior-problems} (see also \cite{KEQ-wavelet-methods-ROI-tomography} where the authors mention in the proof of lemma 2.4 that their method applies also to compactly supported distributions which are piecewise constant in the ROI). According to that result, if $f\in\cdistr(\R^2)$ integrates to zero over all lines intersecting $V$ and $f|_V$ is real analytic, then $f=0$.

Our result for the partial data problem uses stronger assumption $f|_V=0$. This assumption is needed so that the Kelvin transformed function will be compactly supported and we can use density of polynomials.
However, our theorem applies to any dimension $\dimens\geq 2$.
Another difference is in the point of view; we consider the normal operator and observe that the same result holds for a larger class of Riesz potentials. Also our alternative proofs (theorem \ref{thm:alternativeconvolutionproof} and theorem \ref{thm:alternativeproofrieszpotential}) imply uniqueness for the partial data problem without assumption of compact support, rapid decay at infinity is enough. We remark that the X-ray data alone does not uniquely determine the attenuation in general. One cannot even construct $C_c^{\infty}$-functions only from the integrals but one can always recover the singularities, which is equivalent with recovering the function up to a smooth error \cite{KEQ-wavelet-methods-ROI-tomography, NA-mathematics-computerized-tomography, SU:microlocal-analysis-integral-geometry}.\NTR{Added references and rewritten paragraph.}

Unlike in~\cite{CNDK-solving-interior-problem-ct-with-apriori-knowledge, KKW-stability-of-interior-problems} our method is very unstable and concrete reliable reconstructions are basically hopeless.
Our instability comes from the differentiation of the data and approximation of test functions by polynomials up to arbitrary order. However, our method of proof is not the only reason for instability.\NTR{Rephrased.} Instability is an intrinsic property of partial data problems. When we have limited X-ray data it is not guaranteed that we can see all the singularities of~$f$ from the data. Singularities which are invisible in the microlocal sense are related to the instability of inverting~$f$ from its limited X-ray data
\cite{KLM-on-local-tomography, MSU-geodesic-ray-transform-riemannian-surfaces, QU-singularities-x-ray-transform-limited-data, QU-introduction-x-ray-tomography-radon}.
See also \cite{KQ-microlocal-analysis-in-tomography, QU-artifacts-and-singularities-limited-tomography} for discussion of which part of the wave front set is visible in limited data tomography.
Even though our theorem loses stability it gives uniqueness which is relevant for applications.\NTR{Changed references since the authors in \cite{KEQ-wavelet-methods-ROI-tomography} mention that their uniqueness proof is not necessarily stable.}

Our theorem is related to travel time tomography and the inverse kinematic problem.
For a review of these, see \cite{SUVZ-travel-time-tomography, UZ-journey-to-the-center-of-the-earth} and also \cite{HE-inverse-kinematic-problem, WZ-kinematic-problem} for the original works by Herglotz, Wiechert and Zoeppritz.
Specifically our result is a contribution to local and global theoretical seismology (see section \ref{sec:applications}). For example one can uniquely determine the difference of the anisotropic perturbations of the S-wave speeds by measuring the arrival time differences of the split S-waves. From the point of view of ROI tomography these seismic applications are new to the best of our knowledge. 

It is also worth mentioning that our result is in a sense complementary to the famous support theorem by Helgason (see lemma~\ref{lemma:helgason}).
Helgason's theorem states that if $C\subset\R^\dimens$ is a convex compact set and $f\in\cdistr(\R^\dimens)$ such that $f|_C=0$ and the X-ray transform $\xrt f$ vanishes on all lines not meeting $C$, then $f=0$.
Compared to theorem~\ref{thm:partialdataproblem}, Helgason's result uses complementary data but gives the same conclusion. Helgason's theorem holds also for rapidly decreasing continuous functions; our partial data result is true for this function class as well (see section \ref{sec:alternativeproofs} and the discussion after theorem \ref{thm:alternativeconvolutionproof}).\NTR{Added comment about rapidly decreasing continuous functions.}

Our theorem has a connection to the fractional Laplacian $(-\Delta)^s$. The operator~$(-\Delta)^s$ can be defined in many equivalent ways and one way is to consider it as the inverse of a Riesz potential~\cite{KWA-ten-definitions-fractional-laplacian}.
In our notation $\riesz f=(-\Delta)^{-s}f$ where $s=(\dimens-\alpha)/2$ assuming $0<\alpha<\dimens$. For example from equation~\eqref{eq:inversionformulausingnormaloperator} we see that in Euclidean space the normal operator of the X-ray transform~$\no$ is the inverse of the fractional Laplacian~$(-\Delta)^{1/2}$.
Thus our result can be seen as a unique continuation property for the operator~$(-\Delta)^{-\delta/2}$ where $\delta$ is any positive non-integer or $\delta=1$.
There are several unique continuation results for the operator $(-\Delta)^s$ when $0<s<1$ and they have been recently used in fractional Calder\'on problems \cite{GRSU-fractional-calderon-single-measurement, GSU-calderon-problem-fractional-schrodinger, RI-liouville-riemann-integrals-potentials, RS-fractional-calderon-low-regularity-stability}. One version of our theorem can be proved using unique continuation of~$(-\Delta)^s$ (see theorem \ref{thm:alternativeproofrieszpotential}). The fractional Laplacian even admits a strong unique continuation property if one assumes more regularity from the function~\cite{FF-unique-continuation-fractional-ellliptic-equations,RU-unique-continuation-scrodinger-rough-potentials}.
Here ``strong" means that the function does not need to be zero in an open set, it only has to vanish to infinite order at some point.
Theorem \ref{thm:maintheoremfornormaloperator} has similar vanishing assumption for~$\riesz f$ instead of~$f$.
There are also (strong) unique continuation results for the higher order Laplacian~$(-\Delta)^{t}$ where~$t$ is a positive non-integer exponent~\cite{FF-unique-continuation-higher-laplacian, GR-fractional-laplacian-strong-unique-continuation, YA-higher-order-laplacian}.\NTR{Minor modifications and added reference to theorem \ref{thm:alternativeproofrieszpotential}.}

In Euclidean space one can reconstruct a compactly supported distribution uniquely from its X-ray transform~\cite{SU:microlocal-analysis-integral-geometry}.
There even exist explicit inversion formulas using the formal adjoint~$\xrt^*$ and the normal operator~$\no$.
It is also known that the X-ray transform is injective\NTR{Removed reference to solenoidal tensor fields, as that was more misleading than helpful in this context.} on compact simple Riemannian manifolds with boundary~\cite{IM:integral-geometry-review}.
Interesting injectivity results considering seismic applications have been obtained for conformally Euclidean metrics which satisfy the Herglotz condition~\cite{deI:abel-transforms-x-ray-tomography}.
See also how the length spectrum can be obtained from the Neumann spectrum of the Laplace-Beltrami operator or from the toroidal modes on these kind of manifolds in three dimensions~\cite{deIK-spectral-rigidity}.
This has a connection to the free oscillations of the Earth.

There are some partial data results for certain manifolds.
If $(M, g)$ is a two-dimensional compact simple Riemannian manifold with boundary and a real-analytic metric~$g$, then one can reconstruct $L^2$-functions locally from their geodesic X-ray transform~\cite{KRI-support-theorem-analytic-metric}.
In dimensions $\dimens\geq 3$ one can relax the analyticity condition to smoothness using a convexity assumption on the boundary~\cite{UV-local-geodesic-ray-transform-inversion}.
Furthermore one can even invert the X-ray transform locally in a stable way and obtain a reconstruction formula based on Neumann series.
Both of the results in \cite{KRI-support-theorem-analytic-metric, UV-local-geodesic-ray-transform-inversion} rely on microlocal analysis.
One can also locally invert, up to potential fields, tensors of order 1 and 2 near a strictly convex boundary point~\cite{SUV-local-invertibility-on-tensors}. We remark that there is a similar distinction between analyticity and smoothness for the injectivity of the weighted X-ray transform in Euclidean space.
When~$\dimens=2$ the analyticity of the weight is required for injectivity while in higher dimensions smoothness is enough~\cite{BO-nonuniqueness-weighted-radon-transform,BQ-support-theorems-analytic-radon-transforms, SU:microlocal-analysis-integral-geometry}. 

\subsection{Organization of the paper}
We begin our treatment by proving the main results in section~\ref{sec:mainresult}. We also discuss\NTR{Fixed typo.} the assumptions used in the results and applications. In section~\ref{sec:integralgeometryanddistributions} we recall some basic theory of distributions and integral geometry in~$\R^{\dimens}$. Section~\ref{sec:proofofpolynomiallemma} is devoted to the proof of lemma~\ref{lemma:generatingpolynomials} which says that one can express all the polynomials in a certain form as a finite linear combination of the derivatives of the kernel of the Riesz potential~$\riesz$. Section~\ref{sec:alternativeproofs} contains alternative proofs for theorem \ref{thm:maintheoremfornormaloperator} and theorem \ref{thm:partialdataproblem}.\NTR{Minor modifications.}

\subsection*{Acknowledgements}
J.I. was supported by the Academy of Finland (decision 295853) and K.M. was supported by Academy of Finland (Centre of Excellence in Inverse Modelling and Imaging, grant numbers 284715 and 309963).
We thank Maarten de Hoop and Todd Quinto for discussions. We also thank Mikko Salo for pointing out the connection between our result and the unique continuation of the fractional Laplacian. 
We are grateful to the anonymous referees for insightful remarks and suggestions.

\section{Proofs of the main results}
\label{sec:mainresult}

\subsection{An overview of the proof}

The rough idea of the proof of theorem~\ref{thm:maintheoremfornormaloperator} is the following.
We may assume that $x_0=0$. The function~$\riesz f$ is smooth in~$V$, and by assumption all of its derivatives vanish at the origin.
By a convolution argument these derivatives can be computed explicitly.
The vanishing of these derivatives amounts to~$f$ integrating to zero against a set of functions.
After a change of variables and suitable rescaling, one can use density of polynomials to show that this set is dense.
Therefore~$f$ has to vanish.

The proofs of the corollaries are more straightforward.
Detailed proofs of these main results are given in section~\ref{subsec:proofsofmainresults} below.
The reader who is not familiar with the theory of distributions and integral geometry can first read section~\ref{sec:integralgeometryanddistributions}.\NTR{Rephrased.}
See section~\ref{sec:alternativeproofs} for alternative proofs of theorems \ref{thm:maintheoremfornormaloperator} and~\ref{thm:partialdataproblem}.\NTR{Rephrased.}

\subsection{Auxiliary results}

In this section we give a few auxiliary results which are needed in our proofs. The first one is a known theorem in distribution theory.

\begin{lemma}[{\cite[p.160 Corollary 4]{TRE:topological-vector-spaces-distributions}}]
\label{lemma:polynomialsaredense}
Let $\Omega\subset\R^{\dimens}$ be an open set. Then the polynomials form a dense subspace of~$\smooth(\Omega)$.
\end{lemma}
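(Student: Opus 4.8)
The plan is to exhibit, for an arbitrary smooth function, an explicit sequence of polynomials converging to it in the Fréchet topology of $\smooth(\Omega)$. Recall that this topology is generated by the seminorms $p_{K,m}(\phi)=\sup_{x\in K,\,\abs{\beta}\leq m}\abs{\partial^{\beta}\phi(x)}$, where $K\subset\Omega$ ranges over compact sets and $m\in\N$. Density of the polynomials therefore means that, given $\phi\in\smooth(\Omega)$, a compact set $K\subset\Omega$, an integer $m$, and $\eps>0$, I must produce a polynomial $P$ with $p_{K,m}(\phi-P)<\eps$. The first step is a localization: choose a cutoff $\chi\in\csmooth(\R^{\dimens})$ that equals $1$ on a neighborhood of $K$ and is supported in $\Omega$, and set $\psi=\chi\phi\in\csmooth(\R^{\dimens})$. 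Since $\psi$ agrees with $\phi$ near $K$, it suffices to approximate the compactly supported function $\psi$ in $p_{K,m}$, and we have replaced the possibly irregular domain $\Omega$ by all of $\R^{\dimens}$.

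Next I would regularize $\psi$ by convolution with a Gaussian. Let $G_t(x)=(4\pi t)^{-\dimens/2}e^{-\abs{x}^2/(4t)}$ and put $g_t=\psi\ast G_t$. Standard properties of mollification give $\partial^{\beta}g_t=(\partial^{\beta}\psi)\ast G_t\to\partial^{\beta}\psi$ uniformly on $\R^{\dimens}$ for every $\beta$, so $p_{K,m}(\psi-g_t)<\eps/2$ once $t$ is small enough. The crucial gain is that $g_t$ is not merely smooth but extends to an entire function on $\C^{\dimens}$: for each fixed $y$ the map $z\mapsto G_t(z-y)$ is entire, $\psi$ has compact support, and one may differentiate under the integral sign, so $g_t(z)=\int_{\R^{\dimens}}\psi(y)G_t(z-y)\,\der y$ is holomorphic in $z\in\C^{\dimens}$.

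Finally, I would expand $g_t$ in its Taylor series about the origin. Because $g_t$ is entire, this series converges to $g_t$ uniformly on every compact subset of $\C^{\dimens}$, and by the Cauchy integral formula the same holds for each partial derivative. Hence the partial sums $P_N$, which are genuine polynomials, satisfy $p_{K,m}(g_t-P_N)\to 0$, and choosing $N$ large gives $p_{K,m}(g_t-P_N)<\eps/2$. Combining the two estimates and using $\psi=\phi$ near $K$ yields $p_{K,m}(\phi-P_N)<\eps$, as required. The only genuinely delicate point is the control of derivatives in the last step: ordinary uniform approximation of a function never transfers to its derivatives, but for the entire function $g_t$ holomorphicity (via Cauchy estimates) upgrades uniform convergence of the Taylor partial sums to uniform convergence of \emph{all} their derivatives on $K$, which is exactly what the $C^m$-seminorm demands. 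This is the step I expect to carry the real content; the cutoff and mollification are routine.
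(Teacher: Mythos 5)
Your proposal is correct. Note that the paper itself gives no proof of this lemma at all --- it is quoted directly from Tr\`eves (p.~160, Corollary~4), and the paper explicitly states that the auxiliary lemmas of that subsection are not proven --- so the comparison is really against the cited reference, and your argument is essentially the classical proof found there: localize with a cutoff, regularize by convolution with a Gaussian to produce an entire function, then truncate its Taylor series. Each step holds up. The cutoff reduction is legitimate because derivatives are local, so $p_{K,m}(\phi-P)=p_{K,m}(\psi-P)$ for any polynomial $P$ once $\psi=\phi$ near $K$; the Gaussian mollification converges in every $p_{K,m}$ because each $\partial^{\beta}\psi$ is bounded and uniformly continuous (and one only needs a single $t$ working for the finitely many $\abs{\beta}\le m$); and $g_t=\psi\ast G_t$ is indeed entire on $\C^{\dimens}$ since $z\mapsto e^{-\sum_j(z_j-y_j)^2/(4t)}$ is entire and $\psi$ has compact support. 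The step you single out as carrying the real content is handled correctly: the Taylor series of an entire function on $\C^{\dimens}$ converges normally on polydiscs by Cauchy's estimates, and uniform convergence of holomorphic functions on compact sets passes to all derivatives (Weierstrass), which is exactly what the $C^m$-seminorm demands; this is precisely why the detour through holomorphy is needed, as polynomial approximation in $C^0$ alone (Stone--Weierstrass) would not control $\partial^{\beta}$. One implicit point worth making explicit: density with respect to the single seminorm $p_{K,m}$ suffices because the family $\{p_{K,m}\}$ is directed, so every basic neighborhood of $\phi$ contains a ball of one such seminorm. For contrast, a genuinely different route (available but not the one you or Tr\`eves take) is by duality: by Hahn--Banach it suffices to show that any $u\in\cdistr(\Omega)$ annihilating all polynomials vanishes, which follows since its Fourier--Laplace transform is entire with all derivatives at the origin equal to $\ip{u}{(-ix)^{\beta}}=0$; that proof is shorter but leans on Paley--Wiener--Schwartz machinery, whereas yours is self-contained.
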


Recall the kernel of the Riesz potential $\kernel(x)=\abs{x}^{-\alpha}$. The next lemma is proved in section~\ref{sec:proofofpolynomiallemma}.

\begin{lemma}
\label{lemma:generatingpolynomials}
If $\dimens\geq 2$ and $\alpha>\dimens-2$ or $\alpha\in\R\setminus\Z$, then for any polynomial~$p$ one can express the product $p(\Kelvin(x))\kernel(x)$ as a finite linear combination of derivatives of $\kernel$. Here $\Kelvin(x)=x\abs{x}^{-2}$ is the Kelvin transform.
\end{lemma}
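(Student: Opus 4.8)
The plan is to reduce everything to monomials and then to a graded linear algebra problem inside a fixed finite-dimensional space of homogeneous functions. Writing $\Kelvin(x)=x\abs{x}^{-2}$ and expanding $p$ into monomials, the product $p(\Kelvin(x))\kernel(x)$ becomes a finite linear combination of the functions $x^{\gamma}\abs{x}^{-\alpha-2\abs{\gamma}}$. Hence it suffices to show that each single function $x^{\gamma}\abs{x}^{-\alpha-2m}$ with $\abs{\gamma}=m$ is a finite linear combination of derivatives $\partial^{\beta}\kernel$. The basic computational tool is the elementary identity $\partial_i(x^{\delta}\abs{x}^{-\alpha-2k})=\delta_i\,x^{\delta-e_i}\abs{x}^{-\alpha-2k}-(\alpha+2k)\,x^{\delta+e_i}\abs{x}^{-\alpha-2k-2}$, which shows that every $\partial^{\beta}\kernel$ is a finite combination of functions $x^{\delta}\abs{x}^{-\alpha-2k}$.

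Next I would exploit homogeneity. Both $x^{\gamma}\abs{x}^{-\alpha-2m}$ and every $\partial^{\beta}\kernel$ with $\abs{\beta}=m$ are homogeneous of degree $-\alpha-m$, so they lie in the finite-dimensional space $U_m$ spanned by the ``level-$k$ generators'' $x^{\delta}\abs{x}^{-\alpha-2k}$ with $\abs{\delta}=2k-m$ (forcing $\lceil m/2\rceil\le k\le m$). Grading $U_m$ by the level $k$, I would prove by induction on $m$ that $S_m:=\operatorname{span}\{\partial^{\beta}\kernel:\abs{\beta}=m\}$ equals $U_m$; since the target function is the top-level generator of $U_m$, this is enough.

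The inductive step is carried out by producing elements of $S_m$ that hit each level with a nonzero leading coefficient, yielding a triangular system. The top level $k=m$ is reached by the pure derivatives: iterating the rule above gives $\partial^{\gamma}\kernel=c_m\,x^{\gamma}\abs{x}^{-\alpha-2m}+(\text{strictly lower level})$ with $c_m=(-1)^m\prod_{\ell=0}^{m-1}(\alpha+2\ell)$. Each lower level $k=m-j$, $1\le j\le\lfloor m/2\rfloor$, is reached by the Laplacian: for $\abs{\beta}=m-2j$ one has $\Delta^{j}\partial^{\beta}\kernel\in S_m$ (it is a sum of order-$m$ derivatives), while by the induction hypothesis $S_{m-2j}=U_{m-2j}$ the functions $\partial^{\beta}\kernel$ already span the generators of $U_{m-2j}$; computing $\Delta^{j}$ of such a generator via $\Delta(P\abs{x}^{-s})=(\Delta P)\abs{x}^{-s}+(\text{const})\,P\abs{x}^{-s-2}$ for homogeneous $P$ shows its top-level term is a nonzero multiple of the level-$(m-j)$ generator, modulo strictly lower levels. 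Processing the levels from bottom (where homogeneity forces the remainder to vanish) upward then places every generator of $U_m$ in $S_m$, so $S_m=U_m$.

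The real obstacle, and where the hypotheses are decisive, is the nonvanishing of all these leading coefficients. They are products of factors of the two shapes $\alpha+2\ell$ with $\ell\ge 0$ an integer, and $\alpha+2-\dimens+2i$ with $i\ge 0$. If $\alpha>\dimens-2$ then $\alpha>0$ and $\alpha+2-\dimens>0$, so every factor is positive; if $\alpha\in\R\setminus\Z$ then every factor is a non-integer and hence nonzero. The borderline factor is $\alpha+2-\dimens$, which vanishes exactly at $\alpha=\dimens-2$, precisely the case excluded by the assumption ``$\alpha>\dimens-2$ or $\alpha\in\R\setminus\Z$''. Thus the entire argument reduces to verifying that these explicit products do not vanish, rather than to any analytic estimate, and this bookkeeping of coefficients is the main point to get right.
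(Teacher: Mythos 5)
Your proof is correct, and it establishes the same core claim as the paper's proof --- that each function $x^{\gamma}\abs{x}^{-\alpha-2m}$ with $\abs{\gamma}=m$ (the paper's $D_{i_1\dotso i_m}=A_{i_1}\dotsm A_{i_m}C$ is exactly such a function) lies in the span of the $m$th-order derivatives of $\kernel$ --- but the inductive mechanism is genuinely different. The paper runs a single induction on the order: differentiating the already-captured $D_{i_1\dotso i_{m-1}}$ once produces the target top-level term plus residues at one level only, and those residues are removed by the divergence identity $\sum_{i_k}\partial_{i_k}D_{i_1\dotso i_{m-1}}=(\dimens-m-\alpha)BC\prod_{l\neq k}A_{i_l}$; only two of your ``levels'' ever appear per step, at the price of needing $\alpha\neq 2-2m$ and $\alpha\neq \dimens-m$ for all $m$. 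You instead filter the whole homogeneous space $U_m$ by levels and run a strong induction, reaching the top level by pure derivatives and each lower level $m-j$ by $\Delta^{j}$ applied to order-$(m-2j)$ data, eliminating bottom-up. Two remarks on your version. First, your levels form a filtration, not a grading: since $\abs{x}^2=\sum_i x_i^2$, every level-$k$ generator is itself a linear combination of level-$(k+1)$ generators, so ``triangular system'' must not be read as an argument about a basis; however, your bottom-up elimination only uses that each remainder lies in the span of generators already shown to belong to $S_m$, which is exactly what your $\Delta^{j}$ computation provides, so the argument is sound as you run it. Second, your coefficient bookkeeping is slightly more economical than the paper's: your exceptional set is $\{0,-2,-4,\dots\}\cup\{\dimens-2,\dimens-4,\dots\}$, whereas the paper's proof must avoid $\{0,-2,-4,\dots\}\cup\{\dimens-2,\dimens-3,\dimens-4,\dots\}$, so your route in fact proves the lemma for some $\alpha$ outside its stated hypotheses (e.g.\ $\dimens=5$, $\alpha=2$), a small strengthening the paper's divergence trick does not give.
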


We also need the following support theorem to prove corollary~\ref{cor:applicationstoseismicarrays}.
The proof can be found for example in \cite{HE:integral-geometry-radon-transforms, SU:microlocal-analysis-integral-geometry}.

\begin{lemma}[Helgason's support theorem]
\label{lemma:helgason}
Let~$C\subset\R^\dimens$ be a compact convex set and $f\in\cdistr(\R^\dimens)$. If $\xrt f$ vanishes on all lines not meeting $C$, then $\spt(f)\subset C$.
\end{lemma}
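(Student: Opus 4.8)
The plan is to pass to the Fourier side and show that the Fourier--Laplace transform of $f$ satisfies the exponential bounds that, by the Paley--Wiener--Schwartz theorem, force $\spt(f)\subset C$. Since $C$ is compact and convex, it is cut out by its supporting half-spaces, $C=\{x\in\R^\dimens:\ip{x}{\omega}\le H_C(\omega)\text{ for all }\omega\in S^{\dimens-1}\}$, where $H_C(\omega)=\sup_{x\in C}\ip{x}{\omega}$ is the support function. Hence it suffices to prove $\ip{x}{\omega}\le H_C(\omega)$ for every $x\in\spt(f)$ and every direction $\omega$; equivalently, $H_{\spt(f)}(\omega)\le H_C(\omega)$ for all $\omega$.

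First I would exploit the projection--slice relation. Fix a direction $\theta\in S^{\dimens-1}$ and parametrize the lines in direction $\theta$ by their base point $y\in\theta^\perp$. Such a line meets $C$ if and only if $y\in\pi_\theta(C)$, the orthogonal projection of $C$ onto $\theta^\perp$, so the hypothesis says that the map $y\mapsto \xrt f(\theta,y)$ is a compactly supported distribution on $\theta^\perp$ whose support lies in the compact convex set $\pi_\theta(C)$. The projection--slice theorem identifies its $(\dimens-1)$-dimensional Fourier transform with the restriction of $\hat f$ to $\theta^\perp$. Applying the Paley--Wiener--Schwartz theorem inside the hyperplane $\theta^\perp$ then yields, for some $N_\theta$ and $C_\theta$, the bound $\abs{\hat f(\zeta)}\le C_\theta(1+\abs{\zeta})^{N_\theta}e^{H_{\pi_\theta(C)}(\operatorname{Im}\zeta)}$ for all $\zeta$ in the complexification of $\theta^\perp$. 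The key elementary observation is that $H_{\pi_\theta(C)}$ and $H_C$ agree on $\theta^\perp$, because $\ip{\pi_\theta(x)}{\eta}=\ip{x}{\eta}$ whenever $\eta\perp\theta$.

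Next I would combine these slice-wise estimates. Given an arbitrary direction $\omega\in S^{\dimens-1}$, choose $\theta\in S^{\dimens-1}$ with $\theta\perp\omega$, which is possible because $\dimens\ge 2$. Evaluating the previous bound at the purely imaginary point $\zeta=it\omega\in(\theta^\perp)_{\C}$ gives $\abs{\hat f(it\omega)}\le C_\theta(1+t)^{N_\theta}e^{tH_C(\omega)}$ for all $t>0$. Because $f$ is compactly supported, $\hat f$ is entire of exponential type, and the support function of $\ch(\spt(f))$ coincides with the radial indicator of $\hat f$ along imaginary rays; that is, $H_{\spt(f)}(\omega)=\limsup_{t\to\infty}t^{-1}\log\abs{\hat f(it\omega)}$. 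The slice bound forces this $\limsup$ to be at most $H_C(\omega)$, so $H_{\spt(f)}(\omega)\le H_C(\omega)$ for every $\omega$, and therefore $\spt(f)\subset\ch(\spt(f))\subset C$, as claimed.

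The routine ingredients---the projection--slice identity, the behaviour of the support function under projection, and the existence of $\theta\perp\omega$---are all straightforward. I expect the main obstacle to be the final analytic input: the several-variable identity equating the support function of the convex hull of $\spt(f)$ with the indicator of its Fourier--Laplace transform along imaginary directions, a P\'olya-type theorem on the conjugate indicator diagram (the multidimensional analogue of the Plancherel--P\'olya circle of results). This is the genuinely nontrivial complex-analytic fact, and it is precisely what lets us deduce a global support statement from growth estimates taken only along imaginary rays, uniformly for all $\dimens\ge 2$. An alternative route that avoids it is to mollify $f$ to reduce to the smooth case, restrict to arbitrary two-planes to reduce to $\dimens=2$, and then combine the intersection-of-balls description of the compact convex set $C$ with the classical planar support theorem.
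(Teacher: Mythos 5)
Your argument is sound up through the slice estimates: the projection--slice identity, the Paley--Wiener--Schwartz bound inside $(\theta^{\perp})_{\C}$, the observation that $H_{\pi_\theta(C)}=H_C$ on $\theta^{\perp}$, and the choice of $\theta\perp\omega$ are all correct. The fatal step is the final ``P\'olya-type'' identity $H_{\spt(f)}(\omega)=\limsup_{t\to\infty}t^{-1}\log\abs{\hat f(it\omega)}$. This is true in one variable (indicator-diagram theory), but it is \emph{false} for $\dimens\geq 2$. The reason is that the restriction of $\hat f$ to the complex line $\C\omega$ is exactly the one-dimensional Fourier--Laplace transform of the pushforward $f_\omega$ of $f$ under $x\mapsto\ip{x}{\omega}$, so the limsup along the ray $it\omega$ computes $\sup\spt(f_\omega)$, not $H_{\spt(f)}(\omega)$; pushforwards can collapse by cancellation. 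Concretely, take $g\in\csmooth(\R^2)$ a radial bump with $\spt(g)=\overline{B}(0,1)$ and $f=\partial_{x_2}g$. Then $\hat f(\zeta)=i\zeta_2\hat g(\zeta)$ vanishes identically on the complex line $\C\,(1,0)$, so your limsup is $-\infty$ for $\omega=(1,0)$, while $H_{\spt(f)}((1,0))=1$. The correct Paley--Wiener--Schwartz recovery of the support function requires control of $\sup_{\xi\in\R^{\dimens}}\abs{\hat f(\xi+it\omega)}$ over the whole tube $\{\operatorname{Im}\zeta=t\omega\}$, not just the single point $\xi=0$ on each ray.

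This gap is not a removable technicality where it matters most, namely $\dimens=2$. There your slice estimates control $\hat f$ only on the thin set $\bigcup_{\omega}\C\omega=\{\xi+i\eta:\xi\parallel\eta\}$, and bounds on $\C\omega$ encode precisely the support of $f_\omega$, which \emph{is} the hypothesis (in the plane $f_\omega$ is the X-ray data over the lines perpendicular to $\omega$); so the method returns to its starting point exactly in the case to which all higher-dimensional cases reduce by slicing with two-planes. For $\dimens\geq 3$ a repair is conceivable: every point $\xi+it\omega$ lies in $(\theta^{\perp})_{\C}$ for any unit $\theta\perp\{\omega,\xi\}$, so your complexified hyperplanes cover all of $\C^{\dimens}$, and after mollifying $f$ (which would also fix the unaddressed point that restricting the distribution $\xrt f$ to the fixed-direction family $\{\theta\}\times\theta^{\perp}$ needs justification) one could try to make the constants $C_\theta,N_\theta$ uniform in $\theta$ and invoke the full Paley--Wiener--Schwartz theorem. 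But your closing fallback for the planar case---``the classical planar support theorem''---is the very statement being proved, so as written the argument assumes the result where the difficulty lives. For comparison, the paper does not prove this lemma at all: it quotes it from the literature (Helgason; Stefanov--Uhlmann), where the proofs proceed by spherical symmetrization and moment/Abel-equation arguments designed precisely to avoid this pitfall.
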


\subsection{Proofs of the results}
\label{subsec:proofsofmainresults}

Now we are ready to prove our main theorem and its corollaries. Let $\dimens\geq 2$. Recall the definition of the Riesz potential $\riesz f=f\ast \kernel$ for $f\in\cdistr(\R^\dimens)$ where $\alpha=\dimens-1$ or $\alpha\in\R\setminus\Z$ and $\alpha<\dimens$. The kernel $\kernel$ has an expression $\kernel(x)=\abs{x}^{-\alpha}$. We denote by $\Kelvin$ the Kelvin transform $K(x)=x\abs{x}^{-2}$. See section~\ref{sec:integralgeometryanddistributions} for basic results on distribution theory used in the proof.

\begin{proof}[Proof of theorem~\ref{thm:maintheoremfornormaloperator}]
We have to show that if $f\in\cdistr(\R^\dimens)$ and $V\subset\R^\dimens$ is any nonempty open set such that $f|_V=0$ and $\partial^{\beta}(\riesz f)(x_0)=0$ for some $x_0\in V$ and all $\beta\in\N^\dimens$, then $f=0$.
Because the problem is translation invariant we can assume that $x_0=0$.
Since~$f$ has compact support and it vanishes in a neighborhood of the origin, we have that $\spt(f)\subset A$ for some open annulus~$A$ centered at the origin.
Let $g\in\csmooth(\R^\dimens)$ be a symmetric smooth version of $\kernel$ such that $g|_A=\kernel|_A$.
Choosing small enough $\epsilon>0$ we have $\riesz f|_{B(0, \epsilon)}=(f\ast g)|_{B(0, \epsilon)}$ where $f\ast g\in \csmooth(\R^\dimens)$ by lemma~\ref{lemma:convolutionbetweencompactlysupporteddistributionadntestfunction}.
Since $\riesz f$ vanishes to infinite order at $0$ lemmas~\ref{lemma:convolutionbetweencompactlysupporteddistributionadntestfunction} and~\ref{lemma:derivativeofconvolution} give us $\partial^{\beta}(f\ast g)(0)=(f\ast(\partial^{\beta}g))(0)=\ip{f}{\tau_0\widetilde{\partial^{\beta}g}}=\ip{f}{\widetilde{\partial^{\beta}g}}=0$ for all multi-indices $\beta\in\N^\dimens$.
Since~$g$ is symmetric we get the condition $\ip{f}{ \partial^{\beta}g}=0$.

Let $\eta\in C^{\infty}_c(A)$ be such that $\eta=1$ in $\spt(f)$.
By lemma~\ref{lemma:testfunctionsagreeonsupport} and the definition of restriction~$f|_A$ we have $0=\ip{f}{\partial^{\beta}g}=\ip{f}{\eta\partial^{\beta}g}=\ip{f|_A}{\eta\partial^{\beta}g}$.
Since $g|_A=\kernel|_A$ by lemma~\ref{lemma:generatingpolynomials} we obtain all the polynomials~$p$ in the form $p(\Kelvin(x))\kernel(x)$ restricted to~$A$ by taking finite linear combinations of the derivatives of~$g$.
Using linearity we obtain $\ip{f|_A}{\eta \kernel (p\circ\Kelvin)}=0$ for all polynomials~$p$.
Taking the pullback we get $\ip{f|_A\circ\Kelvin}{\eta_1 p}=0$ where $\eta_1=((\eta\abs{J_{\Kelvin^{-1}}}^{-1})\circ\Kelvin)\kernel^{-1}$.
Let $\psi\in \smooth(\Kelvin^{-1}(A))$.
By lemma \ref{lemma:polynomialsaredense} there exists a sequence of polynomials~$p_k$ such that $p_k\to\psi$ in $\smooth(\Kelvin^{-1}(A))$.
This implies $\eta_1 p_k\to\eta_1\psi$ in $\smooth(\Kelvin^{-1}(A))$ because $\spt(\eta_1)\subset\subset\Kelvin^{-1}(A)$.
Since $f|_A\circ\Kelvin\in~\cdistr(\Kelvin^{-1}(A))$ by continuity $\ip{\eta_1 (f|_A\circ\Kelvin)}{\psi}=\ip{f|_A\circ\Kelvin}{\eta_1\psi}=0$, i.e. $\eta_1(f|_A\circ\Kelvin)=0$.
But now $\eta_1\neq 0$ in $\Kelvin^{-1}(\spt(f))=\spt(f|_A\circ\Kelvin)$ and hence $f|_A\circ\Kelvin=0$ by lemma~\ref{lemma:smoothfunctionnonzeroinsupport}.
Again using lemma~\ref{lemma:pullbackzerodistributionzero} we obtain $f|_A=0$ which implies $f=0$.\NTR{Added reference to lemma \ref{lemma:polynomialsaredense}.}
\end{proof}

As an immediate consequence we obtain the proofs for the X-ray tomography problem with partial data.

\begin{proof}[Proof of theorem~\ref{thm:partialdataproblem}]
We have to show that if $f\in\cdistr(\R^{\dimens})$ and $V\subset\R^{\dimens}$ is any nonempty open set such that $f|_V=0$ and~$\xrt f|_{\Gamma_V}=0$ where $\Gamma_V$ is the set of all lines that intersect~$V$, then $f=0$. We can assume that $V$ is a ball centered at the origin. Let $\varphi\in\csmooth(V)$.
From the definition of the normal operator of the X-ray transform we obtain $\ip{\no f}{ \varphi}=\ip{\xrt f}{\xrt\varphi}=0$ since $\xrt\varphi\in\csmooth(\Gamma_V)$.
Hence $\no f|_V=0$ and the claim follows from theorem~\ref{thm:maintheoremfornormaloperator} by taking $\alpha=\dimens-1$.
\NTR{Mentioned that $V$ can be assumed to be a ball.}
\end{proof}

\begin{proof}[Proof of corollary~\ref{cor:uniquenessinannulus}]
We have to show that if $R>r>0$ and $f\in\cdistr(\R^{\dimens})$ such that $\spt(f)\subset\overline{B}(0, R)\setminus B(0, r)$ and~$\xrt f$ vanishes on all lines that meet $B(0, r)$, then $f=0$. Take a nonempty open set $V\subset\subset B(0, r)$.
Then we have $f|_V=0$ and~$\xrt f$ vanishes on all lines that intersect~$V$.
Theorem~\ref{thm:partialdataproblem} implies that $f=0$.
\end{proof}

\begin{proof}[Proof of corollary~\ref{cor:applicationstoseismicarrays}]
Let $\Omega\subset\R^{\dimens}$ be a bounded, smooth and strictly convex set and $\Sigma\subset\partial\Omega$ nonempty open subset of the boundary.
We have to show that if $f\in\cdistr(\R^{\dimens})$ is supported in~$\overline{\Omega}$ and~$\xrt f$ vanishes on all lines that meet~$\Sigma$, then $f=0$.
We can assume that~$\Sigma$ is connected by passing to a connected component.
Denote by~$\ch(\Sigma)$ the convex hull of~$\Sigma$ (see figure \ref{fig:ideaoftheproof}).
By the Helgason support theorem (lemma~\ref{lemma:helgason}) the function~$f$ vanishes in~$\ch(\Sigma)$.
Take open set $V\subset\ch(\Sigma)$, $V\neq\varnothing$.
Then $f|_V=0$ and $\xrt f$ vanishes on all lines that intersect~$V$.
We can apply theorem~\ref{thm:partialdataproblem} to conclude that $f=0$.
\end{proof}

\begin{figure}[htp]
\centering
\includegraphics[width=7cm]{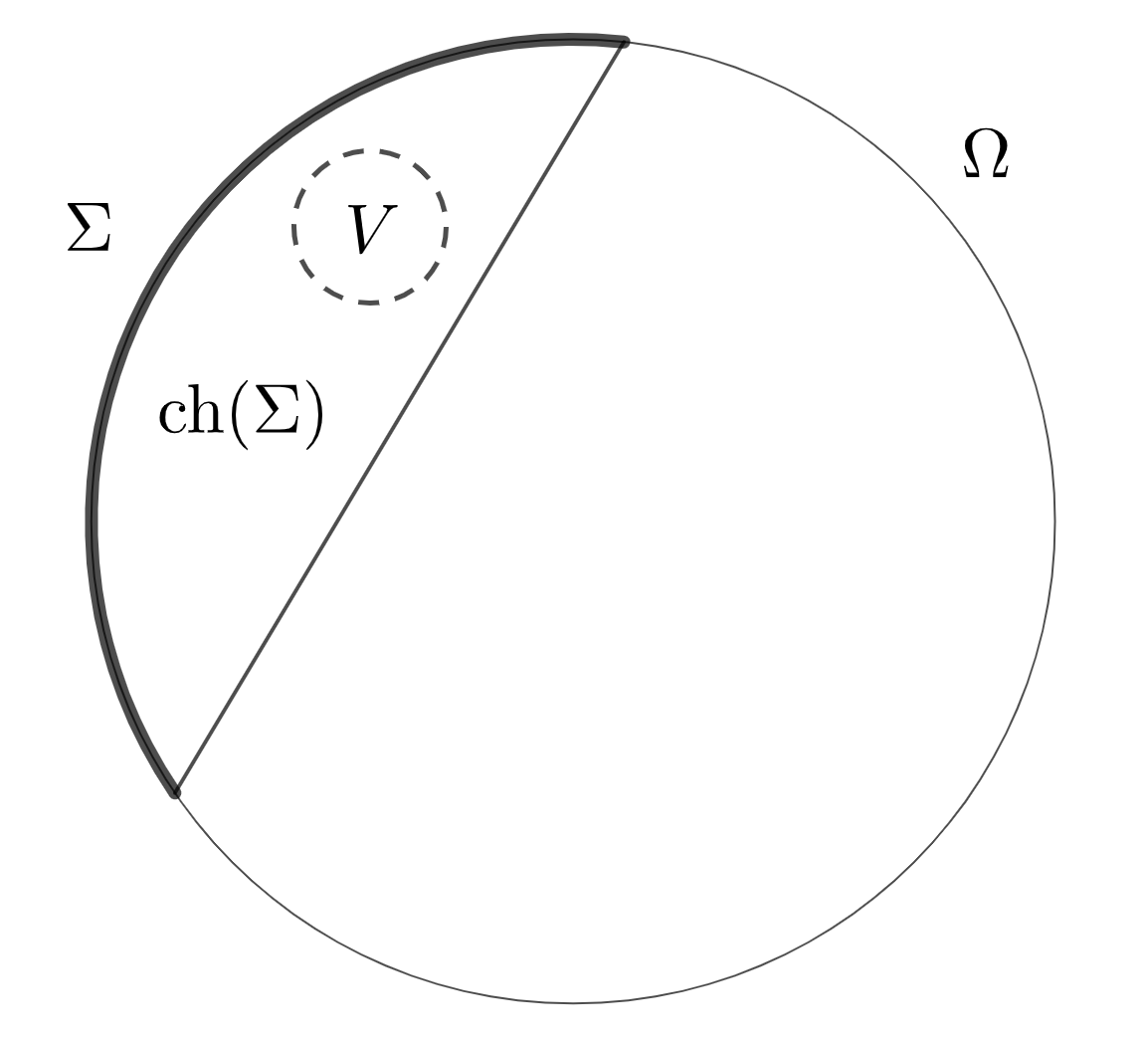}
\caption{Idea of the proof of corollary~\ref{cor:applicationstoseismicarrays}. Here~$\Sigma$ (thick arc) is a connected open subset of~$\partial\Omega$ and $\ch(\Sigma)$ (segment) its convex hull. Helgason's support theorem (lemma~\ref{lemma:helgason}) implies that~$f$ vanishes in $\ch(\Sigma)$ and then theorem~\ref{thm:partialdataproblem} is used for the dashed set~$V$ to conclude that $f=0$.}
\label{fig:ideaoftheproof}
\end{figure}

\subsection{Discussion of assumptions and methods}
\label{sec:discussionofassumptionsandmethods}

We assume that $f|_V=0$ so as to ensure that $\riesz f|_V$ is smooth and the differentiation makes sense.
For this purpose alone it would have been enough to assume that $f|_V$ is smooth.
However, if $f|_V$ is non-zero, our method of proof appears to become untractable. Especially the Kelvin transformed function $f\circ K$ is not compactly supported anymore and we can not use density of polynomials in the proof. If $f|_V$ is polynomial (or real analytic) and $\dimens=2$, the method of~\cite{KKW-stability-of-interior-problems} can be applied to prove the partial data result for the X-ray transform directly.
Our method has the additional freedom that the Riesz potential need not be exactly the normal operator and that the dimension is not restricted to two.
Moreover, in the physical application of shear wave splitting in the mantle, only the anisotropy in the mantle will matter and the perturbation can thus be taken to be supported outside the core.
\NTR{Added this paragraph.}

The assumptions in theorem~\ref{thm:maintheoremfornormaloperator} are not optimal. The assumption of compact support is needed to define the Riesz potential~$\riesz$ on distributions and is crucial in the proof when we use density of polynomials. However, compact support can be replaced with rapid decay at infinity (see theorem \ref{thm:alternativeconvolutionproof} and theorem~\ref{thm:alternativeproofrieszpotential}). Theorem \ref{thm:partialdataproblem} is clearly false if $d=1$. Also one cannot construct arbitrary $C_c^{\infty}$-functions from the integrals over the lines through the ROI only~\cite{KEQ-wavelet-methods-ROI-tomography, NA-mathematics-computerized-tomography, SU:microlocal-analysis-integral-geometry}. Therefore one needs some information of the function~$f$ in the open set~$V$; our method of proof especially requires the assumption $f|_V=0$. In corollary~\ref{cor:applicationstoseismicarrays} it is enough to assume that only the subset $\Sigma\subset\partial\Omega$ is strictly convex and the convex hull of the rest of the boundary does not cover all of~$\Sigma$. The constraint $\alpha<\dimens$ comes from the requirement that the kernel $\kernel$ determines a distribution. The other constraints for~$\alpha$ come from the proof of lemma \ref{lemma:generatingpolynomials}.
\NTR{Added references to alternative proofs and minor modifications.}

It would be interesting to know whether we could weaken the decay assumption in theorem \ref{thm:partialdataproblem} in the smooth case. Does there exist $f\in C^{\infty}(\R^\dimens)$ such that $f|_V=0$ and $\xrt f=0$ for all lines through~$V$ but~$f$ is not identically zero? By theorem~\ref{thm:alternativeconvolutionproof} the result in theorem \ref{thm:partialdataproblem} holds when $f$ decreases faster that any polynomial at infinity. There also exists a counterexample for the Helgason support theorem where the function does not decay rapidly enough~\cite{HE:integral-geometry-radon-transforms, NA-mathematics-computerized-tomography}. Since our theorem is similar in spirit, we would expect a counterexample also in our case.
\NTR{Added remark about rapidly decreasing functions and added reference.}

The normal operator of the X-ray transform~$\no=\xrt^*\xrt$ is an elliptic pseudodifferential operator. Therefore it would be natural to try methods of microlocal analysis to prove our main theorem. But the usual microlocal approach does not work here in the following sense. First, if we do the identification $f\sim g$ if and only if $f-g\in C^{\infty}(\R^\dimens)$, then the claim of theorem~\ref{thm:maintheoremfornormaloperator} is not true. Namely, the assumptions $f|_V\in C^{\infty}(V)$ and $\no f|_V\in C^{\infty}(V)$ do not imply that necessarily $f\in C^{\infty}(\R^\dimens)$. Thus our result is not true modulo~$C^{\infty}$. Second, from the assumptions of theorem \ref{thm:partialdataproblem} it is clear that some of the singularities of $f$ are not visible in the data. These invisible singularities are usually difficult to reconstruct from the limited set of data ~\cite{QU-singularities-x-ray-transform-limited-data, QU-introduction-x-ray-tomography-radon, QU-artifacts-and-singularities-limited-tomography}. The surprising thing here is that even though our data is local and smooth, we can still recover a distribution.

Our theorem considers the unique continuation of the normal operator of the X-ray transform. It is then natural to ask the following question: when does the normal operator of the geodesic X-ray transform on a manifold satisfy the unique continuation property? At the moment no results are known expect in the Euclidean case. Also there does not exist any simple relationship between the normal operator and the fractional Laplacian on general manifolds. In the context of seismic applications, it would be very beneficial to generalize the result to manifolds which are equipped with a conformally Euclidean metric satisfying the Herglotz condition \cite{HE-inverse-kinematic-problem, WZ-kinematic-problem}. For example the widely used model of spherically symmetric Earth (PREM model) satisfies the Herglotz condition to a good accuracy excluding discontinuity zones~\cite{DA-PREM-model, SHE-introduction-to-seismology}. But our method of proof seems to fit only to the Euclidean case, i.e. to zero curvature. Our proof was heavily based on a density argument using polynomials and polynomials were obtained by differentiating the kernel of the Riesz potential. Our preliminary calculations suggest that we cannot obtain all the polynomials even in the constant negative curvature case. In fact the procedure fails in the very first steps: we cannot even construct polynomials of order 2. Therefore we would need a different approach if we wanted to generalize our result to non-Euclidean manifolds. 

There is another proof for theorem \ref{thm:partialdataproblem} which is based on spherical symmetry and angular Fourier series (see section~\ref{sec:alternativeproofpartialdata}). This method could perhaps generalize to some sort of spherically symmetric manifolds but it is not studied in a great detail yet. The big problem of general manifolds is that one cannot do explicit calculations. Especially we would need to express the Chebyshev polynomials in a nice form and show properties of them. The integral kernel is known in the conformally Euclidean case~\cite{deI:abel-transforms-x-ray-tomography}. However, the issue becomes to calculate the derivatives of the kernel up to any order since the idea in the alternative proof is also to obtain all the polynomials and use density.
\NTR{Minor modifications.}

In section~\ref{sec:applications} we studied the applications of our results to seismology. We discussed about a model where we measure arrival time differences of split S-waves in a thin annulus. We did a linearization of the anisotropies of the S-wave speeds in isotropic background and made an (artificial) assumption that the difference of the perturbations is independent of direction of propagation. One could also consider a more general linearization in the elastic theory. This means that we have a known isotropic elastic model and a small anisotropic perturbation in the stiffness tensor~$c_{ijkl}$ to be determined from travel time measurements. It is shown in~\cite{SHA-integral-geometry-tensor-fields} that this kind of linearization leads to the X-ray tomography problem of a tensor field of degree 4 for P-waves. For S-waves one needs to study the so-called mixed ray transform of tensor fields of degree 4. There exists a kernel characterization for the full mixed ray transform of tensors of arbitrary order on 2-dimensional compact simple Riemannian manifolds with boundary~\cite{deSZ-mixed-ray-transform}. But there are no known partial data results for the mixed ray transform. These would be highly beneficial and interesting considering applications in seismology. 

If one treats the annulus as a thin layer with respect to the radius of the Earth (``flat Earth"), the situation resembles the X-ray tomography problem in a periodic slab $[0, \epsilon]\times \mathbb{T}^2$, $\epsilon>0$.
There is a  kernel characterization for the X-ray transform of $L^2$-regular tensor fields of any order on periodic slabs of type $[0,1]\times \mathbb{T}^\dimens$ where~$\dimens$ is any non-negative integer \cite{GI-tensor-tomography-periodic-slabs}.
In particular the X-ray transform has a nontrivial kernel even for scalar fields in contrast to our result. 

\section{Integral geometry and distributions}
\label{sec:integralgeometryanddistributions}

\subsection{Distribution theory}
Let us review some basic distribution theory.
A more detailed treatment can be found in a number of introductory books on distribution theory and functional analysis, e.g.  \cite{HO-topological-vector-spaces, MI:distribution-theory, RU:functional-analysis, SA:fourier-analysis-distributions, TRE:topological-vector-spaces-distributions}.
This introduction is included for the benefit of readers less familiar with the theory and for the sake of easy reference later on. All the lemmas of this subsection are either well known or trivial and are therefore not proven.

Consider an open domain $\Omega\subset\R^\dimens$.
We denote by $\smooth(\Omega)$ the space of all smooth functions $\Omega\to\C$ and by $\csmooth(\Omega)$ the subspace consisting of compactly supported functions.
These spaces are equipped with the topology of uniform convergence of derivatives of any order on compact sets.
The topological duals of these function spaces are denoted by $\cdistr(\Omega)$ and $\distr(\Omega)$, respectively, and their elements are called distributions.
The space $\cdistr(\Omega)$ can be identified with the subspace of $\distr(\Omega)$ consisting of compactly supported distributions.

A multi-index $\beta=(\beta_1,\dots,\beta_\dimens)\in \N^\dimens$ is a $\dimens$-tuplet of natural numbers.
We use the convention that $0\in\N$.
We write $\abs{\beta}\coloneqq\beta_1+\dotso+\beta_\dimens$ and 
\begin{equation}
\partial^{\beta}=\bigg(\frac{\partial}{\partial x_1}\bigg)^{\beta_1}\cdots\bigg(\frac{\partial}{\partial x_\dimens}\bigg)^{\beta_\dimens}.
\end{equation}
The distributional derivative of order $\beta$ of $u\in\distr(\Omega)$ is defined so that
\begin{equation}
\ip{\partial^{\beta}u}{\varphi}=(-1)^{|\beta|}\ip{u}{\partial^{\beta}\varphi}
\end{equation}
for all $\varphi\in\csmooth(\Omega)$ and similarly for~$\distr$ and~$\csmooth$ replaced with~$\cdistr$ and~$\smooth$.

The value of a distribution evaluated at a test function only depends on the values of the test functions in the support of the distribution as stated in the next lemma.

\begin{lemma}
\label{lemma:testfunctionsagreeonsupport}
Let $\Omega\subset\R^\dimens$ be open and $u\in\cdistr(\Omega)$.
If $\psi_1, \psi_2\in \smooth(\Omega)$ are such that $\psi_1|_{\spt(u)}=\psi_2|_{\spt(u)}$, then $\ip{u}{\psi_1}=\ip{u}{\psi_2}$. The corresponding result also holds with~$\cdistr$ and~$\smooth$ replaced with~$\distr$ and~$\csmooth$.
\end{lemma}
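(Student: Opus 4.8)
The plan is to reduce everything to the single structural fact behind the notion of support: a compactly supported distribution $u$ vanishes on the open set $\Omega\setminus\spt(u)$, so it pairs to zero against any smooth function that is identically zero near $\spt(u)$. By linearity it suffices to set $\phi\coloneqq\psi_1-\psi_2\in\smooth(\Omega)$, note that $\phi$ vanishes on $\spt(u)$, and prove $\ip{u}{\phi}=0$; the conclusion $\ip{u}{\psi_1}=\ip{u}{\psi_2}$ is then immediate. So the whole statement collapses to: if a smooth function is zero on $\spt(u)$, then $u$ annihilates it.

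First I would record the locality of the pairing. Since $u\in\cdistr(\Omega)$ has compact support, its action extends from $\csmooth(\Omega)$ to all of $\smooth(\Omega)$ by $\ip{u}{\chi}=\ip{u}{\zeta\chi}$, where $\zeta\in\csmooth(\Omega)$ is any fixed cutoff equal to $1$ on a neighbourhood of $\spt(u)$; the value is independent of $\zeta$ because two such cutoffs differ by a function vanishing near $\spt(u)$, which $u$ kills. This is what makes the pairings in the statement meaningful for the merely smooth $\psi_1,\psi_2$, and it already shows that $\ip{u}{\cdot}$ only registers a test function through its behaviour near $\spt(u)$. Next I would localize: choose $\eta\in\csmooth(\Omega)$ with $\eta\equiv 1$ on $\spt(u)$ and split $\phi=(1-\eta)\phi+\eta\phi$. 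The term $(1-\eta)\phi$ is zero on the open set where $\eta=1$, which contains $\spt(u)$, so $\ip{u}{(1-\eta)\phi}=0$ by the support property; the matter reduces to the compactly supported function $\eta\phi$, which again coincides with $\phi$, hence with $0$, on $\spt(u)$. The variant with $\cdistr,\smooth$ replaced by $\distr,\csmooth$ is handled by the same splitting, now with $\phi\in\csmooth(\Omega)$ of compact support, so that no preliminary extension of $u$ is needed.

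The delicate point — and the step I would spend the most care on — is exactly this reduced claim: passing from ``$\eta\phi$ vanishes on the closed set $\spt(u)$'' to $\ip{u}{\eta\phi}=0$. I would dispatch it through the finite-order representation of the compactly supported distribution $u$ as a finite sum of derivatives of measures carried by $\spt(u)$, combined with the localizing cutoff $\eta$, which confines the whole computation to an arbitrarily thin neighbourhood of $\spt(u)$. Everything else is routine bookkeeping with cutoffs and the definition of restriction, and the construction matches verbatim the choice $\eta\equiv 1$ on $\spt(f)$ used in the proof of Theorem~\ref{thm:maintheoremfornormaloperator}, so the present lemma is precisely the abstract fact that that proof invokes.
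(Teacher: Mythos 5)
Your first two paragraphs are sound as far as they go: the reduction to $\phi=\psi_1-\psi_2$, the extension of $\ip{u}{\cdot}$ to $\smooth(\Omega)$ by a cutoff, and the splitting $\phi=(1-\eta)\phi+\eta\phi$ all work, \emph{provided} $\eta$ is chosen $\equiv 1$ on an open neighbourhood of $\spt(u)$ (merely requiring $\eta\equiv 1$ on the closed set $\spt(u)$ does not make $\{\eta=1\}$ open, as your wording suggests). The fatal problem is the step you yourself flag as delicate: the claim that $\eta\phi=0$ on $\spt(u)$ forces $\ip{u}{\eta\phi}=0$ is not a gap that more care can fill, because it is false --- and so, read literally, is the lemma itself. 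Take $u=\partial_1\delta_0\in\cdistr(\R^\dimens)$, so that $\spt(u)=\{0\}$, and $\psi_1(x)=x_1$, $\psi_2=0$. Then $\psi_1|_{\spt(u)}=\psi_2|_{\spt(u)}$, but $\ip{u}{\psi_1}=-\ip{\delta_0}{\partial_1\psi_1}=-1\neq 0=\ip{u}{\psi_2}$. Your proposed tool cannot repair this: the structure theorem represents a compactly supported distribution as a finite sum of derivatives of measures (or continuous functions) supported only in a \emph{neighbourhood} of $\spt(u)$, not on $\spt(u)$ itself; and even when the measures do sit on $\spt(u)$ --- precisely the case $u=\partial_1\delta_0$ --- their derivatives detect first-order behaviour of the test function off the support, so the vanishing of $\phi$ on $\spt(u)$ alone gives nothing.

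The paper, for its part, offers no proof at all (it declares every lemma of that subsection well known or trivial), and the statement it records is the customary imprecise form of a true fact: $\ip{u}{\psi_1}=\ip{u}{\psi_2}$ holds whenever $\psi_1-\psi_2$ vanishes on a neighbourhood of $\spt(u)$ (equivalently, for $u$ of finite order $k$, whenever $\partial^\beta(\psi_1-\psi_2)=0$ on $\spt(u)$ for all $\abs{\beta}\le k$). That weaker version is exactly what your second paragraph establishes, and it is also all the paper ever uses: in the proof of theorem~\ref{thm:maintheoremfornormaloperator} the cutoff $\eta\in\csmooth(A)$ may and should be chosen $\equiv 1$ on a neighbourhood of the compact set $\spt(f)$, after which $\partial^{\beta}g$ and $\eta\,\partial^{\beta}g$ agree near $\spt(f)$ and the neighbourhood version applies. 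So the right move is not to push harder on your third paragraph but to delete it, strengthen the hypothesis of the lemma to agreement on a neighbourhood of the support, and observe that this suffices for every application in the paper.
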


It will be convenient to make a change of variables for distributions.
Let $F\colon\Omega_1\to\Omega_2$ be a $C^{\infty}$-diffeomorphism between two domains $\Omega_1,\Omega_2\subset\R^\dimens$.
The pullback $F^*u=u\circ F\in\distr(\Omega_1)$ of $u\in\distr(\Omega_2)$ is defined so that
\begin{equation}
\ip{u\circ F}{\varphi}
=
\ip{u}{(\varphi\circ F^{-1})\abs{J_{F^{-1}}}}
\end{equation}
for all $\varphi\in\csmooth(\Omega_1)$.
Here $\abs{J_{F^{-1}}}$ denotes the absolute value of the Jacobian determinant of~$F^{-1}$.
The same definition can be applied to $u\in\cdistr(\Omega_2)$ with $\varphi\in\smooth(\Omega_1)$.
The supports behave naturally under pullbacks as stated in the next lemma.

\begin{lemma}
\label{lemma:supportofdistributioninpullback}
\label{lemma:pullbackzerodistributionzero}
Let $\Omega_1, \Omega_2\subset \R^\dimens$ be open and $F\colon\Omega_1\to\Omega_2$ be a $C^{\infty}$-diffeomorphism.
If $u\in\distr(\Omega_2)$, then $\spt(u\circ F)=F^{-1}(\spt(u))$. In particular, $u=0$ if and only if $u\circ F=0$.
\end{lemma}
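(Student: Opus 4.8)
The plan is to reduce the whole statement to a single local equivalence: for every open set $U\subset\Omega_1$, the pullback $u\circ F$ vanishes on $U$ if and only if $u$ vanishes on $F(U)$. Granting this, the support identity is immediate. Indeed, $\spt(u\circ F)$ is by definition the complement in $\Omega_1$ of the union of all open sets on which $u\circ F$ vanishes, and likewise $\spt(u)$ is the complement in $\Omega_2$ of the maximal open set of vanishing of $u$. Since $F$ is a homeomorphism, $W\mapsto F^{-1}(W)$ is a bijection between the open subsets of $\Omega_2$ and those of $\Omega_1$, so the local equivalence shows that $F^{-1}$ carries the maximal vanishing set of $u$ to the maximal vanishing set of $u\circ F$. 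Taking complements yields $\spt(u\circ F)=F^{-1}(\spt(u))$.

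To prove the local equivalence I would work directly from the defining identity $\ip{u\circ F}{\varphi}=\ip{u}{(\varphi\circ F^{-1})\abs{J_{F^{-1}}}}$. The key point is that the assignment $T\colon\varphi\mapsto (\varphi\circ F^{-1})\abs{J_{F^{-1}}}$ is a linear bijection from $\csmooth(U)$ onto $\csmooth(F(U))$. It is well defined and maps into $\csmooth(F(U))$ because $\abs{J_{F^{-1}}}$ is smooth and nowhere vanishing, so $\spt(T\varphi)=F(\spt(\varphi))\subset F(U)$ is compact; and its inverse is the analogous transform $\psi\mapsto(\psi\circ F)\abs{J_{F}}$ built from $F^{-1}$, as one checks using the chain-rule relation $\abs{J_{F^{-1}}}\cdot(\abs{J_F}\circ F^{-1})=1$. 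Consequently, $\ip{u\circ F}{\varphi}=0$ for all $\varphi\in\csmooth(U)$ if and only if $\ip{u}{\psi}=0$ for all $\psi\in\csmooth(F(U))$, which is exactly the assertion $u\circ F|_U=0\iff u|_{F(U)}=0$.

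The final clause is then a direct specialization of the support identity: $u=0$ is equivalent to $\spt(u)=\varnothing$, and because $F$ is a bijection this holds if and only if $\spt(u\circ F)=F^{-1}(\spt(u))=\varnothing$, i.e. if and only if $u\circ F=0$. I do not anticipate any genuine obstacle; the only step requiring care is verifying that $T$ is indeed a bijection between the two test-function spaces (together with the routine characterization of the support of a distribution as the complement of its maximal open set of vanishing), after which everything follows formally from the compatibility of this characterization with the homeomorphism $F$.
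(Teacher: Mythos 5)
Your proof is correct. The paper does not actually prove this lemma---it appears in the section whose lemmas are declared ``either well known or trivial and are therefore not proven''---and your argument is precisely the standard proof one would supply: the transpose map $\varphi\mapsto(\varphi\circ F^{-1})\abs{J_{F^{-1}}}$ is a linear bijection of $\csmooth(U)$ onto $\csmooth(F(U))$ (with inverse $\psi\mapsto(\psi\circ F)\abs{J_F}$ via the chain rule), which gives the local equivalence $u\circ F|_U=0\iff u|_{F(U)}=0$, and the support identity then follows from the routine characterization of $\spt$ as the complement of the maximal open set of vanishing.
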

We will make use of the Kelvin transform or the inversion $\Kelvin\colon\R^\dimens\setminus\{0\}\to\R^\dimens\setminus\{0\}$ given by $\Kelvin(x)=\abs{x}^{-2}x$.
The Kelvin transform is its own inverse.

Any element of the spaces $\smooth(\Omega)$, $\cdistr(\Omega)$, $\csmooth(\Omega)$, and $\distr(\Omega)$ can be multiplied by an element of $\smooth(\Omega)$.
Such multiplication has an injectivity property we will need:

\begin{lemma}
\label{lemma:smoothfunctionnonzeroinsupport}
Let $\Omega\subset\R^\dimens$ be open, $u\in\cdistr(\Omega)$ and $g\in C^{\infty}(\Omega)$ such that $g\neq 0$ in $\spt(u)$.
Then $u=0$ if and only if $gu=0$.
\end{lemma}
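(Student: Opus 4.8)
The statement is an if-and-only-if, so I would prove both implications separately. One direction is immediate: if $u=0$, then $gu=0$ regardless of what $g$ is, since multiplication by a smooth function sends the zero distribution to the zero distribution. So the entire content of the lemma lies in the reverse implication: assuming $g\in C^\infty(\Omega)$ is nowhere zero on $\spt(u)$ and that $gu=0$, I want to conclude $u=0$.

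The plan for the hard direction is to exhibit a smooth function that acts as a multiplicative inverse of $g$ \emph{on a neighborhood of the support of $u$}, which is all that matters when pairing against test functions by lemma \ref{lemma:testfunctionsagreeonsupport}. Concretely, since $g$ is continuous and $g\neq 0$ on the closed set $\spt(u)$, I would first pass to an open set $W$ with $\spt(u)\subset W$ on which $g$ is still nonvanishing; this is possible because the nonvanishing of a continuous function is an open condition, so each point of $\spt(u)$ has a neighborhood where $g\neq 0$, and their union gives such a $W$. On $W$ the reciprocal $1/g$ is well defined and smooth, being a composition and quotient of smooth functions with nonvanishing denominator. I would then choose a cutoff $\chi\in\csmooth(\Omega)$ with $\chi=1$ on a neighborhood of $\spt(u)$ and $\spt(\chi)\subset W$, and set $h\coloneqq \chi/g\in\smooth(\Omega)$ (extended by zero outside $W$, which is legitimate since $\chi$ vanishes near $\partial W$). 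By construction $gh=\chi$, and $\chi=1$ on $\spt(u)$.

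With this $h$ in hand the conclusion is essentially formal. For any test function $\varphi$ I compute $\ip{u}{\varphi}$ by comparing $\varphi$ with $\chi\varphi$: since $\chi=1$ on $\spt(u)$, lemma \ref{lemma:testfunctionsagreeonsupport} gives $\ip{u}{\varphi}=\ip{u}{\chi\varphi}=\ip{u}{gh\varphi}=\ip{gu}{h\varphi}=0$, where the next-to-last equality uses that multiplication by the smooth function $g$ moves across the pairing and the last uses the hypothesis $gu=0$. As $\varphi$ was arbitrary, $u=0$. The same argument applies verbatim with $\cdistr,\smooth$ replaced by $\distr,\csmooth$, since lemma \ref{lemma:testfunctionsagreeonsupport} and the multiplication of a distribution by a smooth function are available in both settings.

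The only genuinely delicate point is the construction of the smooth reciprocal $h$: one must take care that $1/g$ is only defined where $g\neq 0$, so the cutoff $\chi$ must be supported inside the region $W$ of nonvanishing while still equaling $1$ on $\spt(u)$. The existence of such a $\chi$ is a standard application of a smooth Urysohn-type cutoff once one knows $\spt(u)$ is contained in the open set $W$; I expect this bookkeeping with supports to be the main (though routine) obstacle, and everything else follows by linearity and lemma \ref{lemma:testfunctionsagreeonsupport}.
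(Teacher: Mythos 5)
Your proof is correct. Note that the paper itself gives no proof of this lemma: it appears in the distribution-theory subsection, where the authors state that ``all the lemmas of this subsection are either well known or trivial and are therefore not proven.'' Your argument --- the trivial forward direction, then constructing a cutoff $\chi$ supported in the open set where $g\neq 0$ with $\chi=1$ near $\spt(u)$, setting $h=\chi/g$, and computing $\ip{u}{\varphi}=\ip{u}{gh\varphi}=\ip{gu}{h\varphi}=0$ via lemma~\ref{lemma:testfunctionsagreeonsupport} --- is exactly the standard argument the authors are implicitly invoking, and it fills the gap correctly, including the delicate point of extending $\chi/g$ by zero so that it is smooth on all of $\Omega$.
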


For test functions $\varphi\in\smooth(\R^\dimens)$ we define translation~$\tau_{x_0}$ by $x_0\in\R^\dimens$ so that $(\tau_{x_0}\varphi)(x)=\varphi(x-x_0)$.
The reflection $\widetilde\varphi$ is defined by $\widetilde{\varphi}(x)=\varphi(-x)$.
Naturally $\tau_{x_0}\varphi,\widetilde\varphi\in\smooth(\R^\dimens)$.
Translations and reflections can be defined on distributions by duality.

Convolutions can also be defined for distributions (see e.g.~\cite{SA:fourier-analysis-distributions}):

\begin{lemma}
\label{lemma:convolutionbetweencompactlysupporteddistributionadntestfunction}
Let $u\in\distr(\R^\dimens)$ and $\varphi\in \csmooth(\R^\dimens)$. Then $u\ast\varphi$ has a representative $g_1\in\smooth(\R^\dimens)$ which is given by the formula $g_1(x)=\ip{u}{ \tau_x\widetilde{\varphi}}$.
Additionally, if $v\in\cdistr(\R^\dimens)$, then $v\ast\varphi$ has a representative $g_2\in\csmooth(\R^\dimens)$ which is given by the formula $g_2(x)=\ip{v} {\tau_x\widetilde{\varphi}}$.
\end{lemma}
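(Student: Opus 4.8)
The plan is to take $g_1(x) := \ip{u}{\tau_x\widetilde\varphi}$ as the candidate representative and to verify in turn that it is smooth, that it represents the distributional convolution $u\ast\varphi$, and finally that compact support of the distribution forces compact support of the representative. The whole argument rests on a single analytic observation: since $\varphi\in\csmooth(\R^\dimens)$, the map $x\mapsto\tau_x\widetilde\varphi$ is a $C^\infty$ curve from $\R^\dimens$ into the topological vector space $\csmooth(\R^\dimens)$. Concretely, for each $j$ the difference quotients $h^{-1}(\tau_{x+he_j}\widetilde\varphi-\tau_x\widetilde\varphi)$ converge, as $h\to0$, to $\tau_x\widetilde{\partial_j\varphi}$; the convergence holds in $\csmooth(\R^\dimens)$ because all the translates involved have support in one fixed compact set (for $x$ in a compact set and $\abs{h}\le1$) and Taylor's theorem with uniformly continuous derivatives gives uniform control of every derivative on that set. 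First I would use this, together with the continuity of $u$ on $\csmooth(\R^\dimens)$, to differentiate under the bracket and obtain $\partial^\beta g_1(x)=\ip{u}{\tau_x\widetilde{\partial^\beta\varphi}}$ for every $\beta\in\N^\dimens$, which shows $g_1\in\smooth(\R^\dimens)$ and simultaneously sets up the derivative formula $\partial^\beta(u\ast\varphi)=u\ast(\partial^\beta\varphi)$.

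Next I would check that $g_1$, viewed as a regular distribution, coincides with $u\ast\varphi$. Recalling that the distributional convolution is defined so that $\ip{u\ast\varphi}{\psi}=\ip{u}{\widetilde\varphi\ast\psi}$ for all $\psi\in\csmooth(\R^\dimens)$ (the relation that extends the classical one, as a Fubini computation on functions confirms), it suffices to prove $\int_{\R^\dimens}g_1(x)\psi(x)\,\der x=\ip{u}{\widetilde\varphi\ast\psi}$. Here I would approximate the left-hand integral by Riemann sums $\sum_i\psi(x_i)\ip{u}{\tau_{x_i}\widetilde\varphi}\,\Delta x=\ip{u}{\sum_i\psi(x_i)\tau_{x_i}\widetilde\varphi\,\Delta x}$ and observe that the vector-valued Riemann sums $\sum_i\psi(x_i)\,\tau_{x_i}\widetilde\varphi\,\Delta x$ converge in $\csmooth(\R^\dimens)$ to $\int\psi(x)\tau_x\widetilde\varphi\,\der x=\widetilde\varphi\ast\psi$; this is again legitimate because $\psi$ has compact support and $x\mapsto\tau_x\widetilde\varphi$ is continuous into $\csmooth(\R^\dimens)$ with all supports trapped in a fixed compact set. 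Continuity of $u$ then lets me pass to the limit inside the bracket and conclude that $g_1$ represents $u\ast\varphi$.

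For the compactly supported case I would run the identical smoothness argument with $v\in\cdistr(\R^\dimens)$, noting that $v$ is continuous on $\smooth(\R^\dimens)$, so the same formula $g_2(x)=\ip{v}{\tau_x\widetilde\varphi}$ defines a smooth function. The only extra point is the support: since $\spt(\tau_x\widetilde\varphi)=x-\spt(\varphi)$, the bracket $\ip{v}{\tau_x\widetilde\varphi}$ can be nonzero only when $x\in\spt(v)+\spt(\varphi)$, which is compact as a sum of two compact sets; hence $g_2\in\csmooth(\R^\dimens)$. I expect the main obstacle to be topological rather than computational: everything hinges on upgrading pointwise limits of difference quotients and of Riemann sums to convergence in the $\csmooth$ topology, i.e.\ uniform convergence of all derivatives on compact sets. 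This is exactly the step where compactness of $\spt(\varphi)$ is indispensable, since it both confines all the relevant supports to a single compact set and, via Taylor's theorem, yields the uniform derivative bounds that make the limits valid in $\csmooth(\R^\dimens)$.
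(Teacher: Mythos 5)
Your proposal is correct, but note that there is no in-paper proof to compare it against: this lemma sits in the subsection whose preamble states that all its lemmas are ``either well known or trivial and are therefore not proven,'' with convolution deferred to the cited literature. What you have written is the standard textbook argument, and it is sound on all three counts: (i) smoothness of $g_1(x)=\ip{u}{\tau_x\widetilde{\varphi}}$ via difference quotients of $x\mapsto\tau_x\widetilde{\varphi}$ converging in $\csmooth(\R^\dimens)$, where you correctly isolate the crucial point that all supports stay in one fixed compact set (convergence in $\csmooth$ is exactly that plus uniform convergence of every derivative, and sequential continuity of $u$ suffices to pass the limit through the bracket); (ii) identification of $g_1$ with the duality definition $\ip{u\ast\varphi}{\psi}=\ip{u}{\widetilde{\varphi}\ast\psi}$ via vector-valued Riemann sums converging in $\csmooth(\R^\dimens)$ to $\widetilde{\varphi}\ast\psi$; (iii) the support estimate $\spt(g_2)\subset\spt(v)+\spt(\varphi)$, which is compact. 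Two small points you should make explicit in a full write-up, neither of which is a genuine gap: in (ii) the scalar Riemann sums on the left converge to $\int g_1\psi\,\der x$ because $g_1$ is continuous, which you must extract from step (i) (continuity of each $\partial^\beta g_1$ follows from continuity of $x\mapsto\tau_x\widetilde{\partial^\beta\varphi}$ into $\csmooth(\R^\dimens)$, by the same support-trapping argument); and for $v\in\cdistr(\R^\dimens)$ you need to rerun the Riemann-sum identification as well, not only the smoothness argument --- this works verbatim, since $\widetilde{\varphi}\ast\psi\in\csmooth(\R^\dimens)\subset\smooth(\R^\dimens)$ and $v$ acts continuously on $\smooth(\R^\dimens)$.
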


\begin{lemma}
\label{lemma:derivativeofconvolution}
Let $u\in\cdistr(\R^\dimens)$ and $v\in\distr(\R^\dimens)$.
Then $u*v\in\distr(\R^\dimens)$ is defined via the formula
\begin{equation}
\ip{u\ast v}{\varphi}=\ip{u}{\widetilde{v}\ast\varphi}
\end{equation}
for all $\varphi\in\csmooth(\R^\dimens)$, and for every $\beta\in\N^\dimens$ the derivatives satisfy
\begin{equation}
\partial^{\beta}(u\ast v)=(\partial^{\beta}u)\ast v=u\ast(\partial^{\beta}v)
\end{equation}
in the sense of distributions.
\end{lemma}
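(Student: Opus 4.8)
The plan is to establish the lemma in two stages: first that the prescription $\ip{u\ast v}{\varphi}=\ip{u}{\widetilde{v}\ast\varphi}$ really does define an element of $\distr(\R^\dimens)$, and then that this distribution satisfies the two differentiation identities. For the well-definedness, fix $\varphi\in\csmooth(\R^\dimens)$. Since $\widetilde{v}\in\distr(\R^\dimens)$, lemma~\ref{lemma:convolutionbetweencompactlysupporteddistributionadntestfunction} gives $\widetilde{v}\ast\varphi\in\smooth(\R^\dimens)$, with the representative $x\mapsto\ip{\widetilde{v}}{\tau_x\widetilde{\varphi}}$; as $u\in\cdistr(\R^\dimens)$ pairs with smooth functions, the right-hand side is defined, and it is plainly linear in $\varphi$. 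What remains is continuity, i.e. that $\varphi\mapsto\ip{u}{\widetilde{v}\ast\varphi}$ is continuous on $\csmooth(\R^\dimens)$. I would reduce this to the continuity of the map $\varphi\mapsto\widetilde{v}\ast\varphi$ from $\csmooth(\R^\dimens)$ into $\smooth(\R^\dimens)$, after which composition with the continuous functional $u$ finishes the argument.

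To verify that continuity, suppose $\varphi_k\to 0$ in $\csmooth(\R^\dimens)$, so all $\varphi_k$ are supported in a common compact set $L$ and $\partial^\gamma\varphi_k\to 0$ uniformly for every $\gamma$. Using $\partial^\gamma(\widetilde{v}\ast\varphi_k)(x)=\ip{\widetilde{v}}{\tau_x\widetilde{\partial^\gamma\varphi_k}}$, observe that for $x$ in a fixed compact set $K$ the shifted functions $\tau_x\widetilde{\partial^\gamma\varphi_k}$ all have support in the fixed compact set $K-L$. Restricted to test functions supported in $K-L$, the distribution $\widetilde{v}$ is bounded by finitely many sup-seminorms of its argument, so $\sup_{x\in K}\abs{\partial^\gamma(\widetilde{v}\ast\varphi_k)(x)}\to 0$. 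Hence $\widetilde{v}\ast\varphi_k\to 0$ in $\smooth(\R^\dimens)$ and $\ip{u}{\widetilde{v}\ast\varphi_k}\to 0$, giving continuity and thus $u\ast v\in\distr(\R^\dimens)$.

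For the differentiation identities I would combine three standard ingredients: the definition of the distributional derivative, the identity $\partial^{\beta}(\widetilde{v}\ast\varphi)=\widetilde{v}\ast\partial^{\beta}\varphi=(\partial^{\beta}\widetilde{v})\ast\varphi$ obtained by differentiating the representative in lemma~\ref{lemma:convolutionbetweencompactlysupporteddistributionadntestfunction}, and the reflection rule $\partial^{\beta}\widetilde{v}=(-1)^{\abs{\beta}}\widetilde{\partial^{\beta}v}$. For the first identity, start from $\ip{\partial^{\beta}(u\ast v)}{\varphi}=(-1)^{\abs{\beta}}\ip{u}{\widetilde{v}\ast\partial^{\beta}\varphi}$, rewrite $\widetilde{v}\ast\partial^{\beta}\varphi=\partial^{\beta}(\widetilde{v}\ast\varphi)$, and move the derivative back onto $u$ to reach $\ip{\partial^{\beta}u}{\widetilde{v}\ast\varphi}=\ip{(\partial^{\beta}u)\ast v}{\varphi}$. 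For the second, instead write $\widetilde{v}\ast\partial^{\beta}\varphi=(\partial^{\beta}\widetilde{v})\ast\varphi=(-1)^{\abs{\beta}}\,\widetilde{\partial^{\beta}v}\ast\varphi$, whence $(-1)^{\abs{\beta}}\ip{u}{\widetilde{v}\ast\partial^{\beta}\varphi}=\ip{u}{\widetilde{\partial^{\beta}v}\ast\varphi}=\ip{u\ast(\partial^{\beta}v)}{\varphi}$. As these hold for every $\varphi\in\csmooth(\R^\dimens)$, the three distributions agree. The only genuinely substantive point is the uniform localization of supports in the second paragraph, which is what lets the local seminorm bound for $v$ be applied on a single fixed compact set as both $x$ and $\varphi$ vary; the remaining manipulations are routine bookkeeping with the definitions.
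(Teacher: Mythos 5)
Your proof is correct. There is nothing in the paper to compare it against: this lemma sits in the distribution-theory subsection whose results are declared ``either well known or trivial and are therefore not proven'', with only a pointer to standard references, so the paper supplies no argument at all. What you have written is a correct, self-contained version of the standard proof those references give. The two substantive points are exactly where you located them: (i) the uniform support localization --- for $x$ in a compact set $K$ and all $\varphi_k$ supported in a common compact set $L$, the functions $\tau_x\widetilde{\partial^\gamma\varphi_k}$ are supported in the single compact set $K-L$, so the local seminorm bound for $\widetilde{v}$ applies with constants independent of $x$ and $k$, giving $\widetilde{v}\ast\varphi_k\to 0$ in $\smooth(\R^\dimens)$; and (ii) the sign bookkeeping via $\partial^{\beta}\widetilde{v}=(-1)^{\abs{\beta}}\widetilde{\partial^{\beta}v}$, from which both differentiation identities drop out of the definitions. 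Two small points you could make explicit if you want the write-up airtight: on $\csmooth(\R^\dimens)$, sequential continuity of a linear functional implies continuity (it is an LF-space), which is what licenses your reduction to sequences $\varphi_k\to 0$; and the identity $\partial^{\beta}(\widetilde{v}\ast\varphi)=\widetilde{v}\ast\partial^{\beta}\varphi=(\partial^{\beta}\widetilde{v})\ast\varphi$ requires differentiating under the duality pairing, which is justified by convergence of the difference quotients of $\tau_x\widetilde{\varphi}$ in $\csmooth(\R^\dimens)$. Neither affects correctness.
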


\subsection{Integral geometry and the normal operator}
In this section we introduce basic theory of integral geometry in~$\R^\dimens$. For this we mainly follow the books~\cite{HE:integral-geometry-radon-transforms, NA-mathematics-computerized-tomography, SU:microlocal-analysis-integral-geometry}, see also \cite{RK-radon-transform-an-local-tomography}. We  define the Riesz potential~$\riesz$ and discuss about its connection to the normal operator of the X-ray transform~$\no$.
\NTR{Added more references to integral geometry.}

Denote by~$\Gamma$ the set of all oriented lines in~$\R^\dimens$. The X-ray transform of a function~$f$ is the map $\xrt f\colon\Gamma\to\R$,
\begin{equation}
\xrt f(\gamma)=\int_{\gamma}f\der s
\end{equation}
for all lines~$\gamma\in\Gamma$ assuming that the integrals exists. The integrals are finite whenever~$f$ decays fast enough at infinity. If the lines are parametrized by the set
\begin{equation}
\{(z, \theta): \theta\in S^{\dimens-1}, \ z\in\theta^{\perp}\},
\end{equation}
the X-ray transform may be written as
\begin{equation}
\xrt f(z, \theta)=\int_{\R}f(z+s\theta)\der s.
\end{equation}
It is a continuous linear map $\xrt\colon \csmooth(\R^\dimens)\to \csmooth(\Gamma)$.
The set~$\Gamma$ can be freely identified with $TS^{\dimens-1}$.
As~$\Gamma$ is a smooth manifold, the test function and distribution spaces on it can be defined similarly to the Euclidean setting.

The formal adjoint $\xrt ^*\colon \smooth(\Gamma)\to\smooth(\R^\dimens)$ is given by
\begin{equation}
\label{eq:formaladjointofxraytransform}
\xrt^*\psi (x)
=
\int_{S^{\dimens-1}}\psi(x-(x\cdot\theta)\theta, \theta)\der\theta.
\end{equation}
The function~$\psi$ can be interpreted as a function in the set of all lines.
The value~$\xrt^*\psi (x)$ is obtained by integrating~$\psi$ over all lines going through the point~$x$.
The formal adjoint does not preserve compact supports, but the integrals in its definition are taken over compact sets.
\NTR{Corrected minor typos in this section ($I\rightarrow\xrt$ and changed dimension from $n$ to $\dimens$) and added comment about the decay of $f$ in the definition of $\xrt f$.}

The operators~$\xrt$ and~$\xrt^*$ can be defined on distributions by duality.
That is, $\xrt\colon\cdistr(\R^\dimens)\to\cdistr(\Gamma)$ and $\xrt^*\colon\distr(\Gamma)\to\distr(\R^\dimens)$ are defined so that they satisfy
\begin{equation}
\ip{\xrt f}{\eta}
=
\ip{f}{\xrt^*\eta}
\end{equation}
for all $f\in\cdistr(\R^\dimens)$ and $\eta\in\smooth(\Gamma)$, and
\begin{equation}
\ip{\xrt^* g}{\varphi}
=
\ip{g}{\xrt\varphi}
\end{equation}
for all $g\in\distr(\Gamma)$ and $\varphi\in\csmooth(\R^\dimens)$. We say that $\xrt f$ vanishes on all lines which intersect an open set $V$, if $\xrt f|_{\Gamma_V}=0$ as a distribution where $\Gamma_V$ is the set of all parametrized lines intersecting $V$.
\NTR{Added explanation what vanishing of $\xrt f$ on lines which intersect $V$ means when $f$ is a distribution.}

It is often convenient to study the X-ray transform~$\xrt$ by way of its normal operator $\no=\xrt^*\xrt$.
This is not suited for all partial data scenarios and our proof in section~\ref{sec:alternativeproofpartialdata} works directly at the level of~$\xrt$, but we  make use of the normal operator elsewhere.
Due to the mapping properties established above, the normal operator maps $\no\colon\cdistr(\R^\dimens)\to\distr(\R^\dimens)$.
It is a pseudodifferential operator of order $-1$, but our problem is not well suited for a microlocal approach as discussed in section~\ref{sec:discussionofassumptionsandmethods}.
For a test function $f\in\csmooth(\R^\dimens)$ the normal operator can be expressed conveniently as~\cite{SU:microlocal-analysis-integral-geometry}
\begin{equation}
\label{eq:normaloperatorfortestfunctions}
\no f(x)
=
2\int_{\R^\dimens}\frac{f(y)}{\abs{x-y}^{\dimens-1}}\der y
=
2(f\ast\abs{\cdot}^{1-\dimens})(x).
\end{equation}
The convolution formula holds for a distribution $f\in\cdistr(\R^\dimens)$ by a duality argument, and it holds also for continuous functions which decrease rapidly enough at infinity~\cite{HE:integral-geometry-radon-transforms}.
\NTR{Added comment about functions which decrease fast enough at infinity.}

The normal operator of the X-ray transform can be inverted by the formula~\cite{SU:microlocal-analysis-integral-geometry}
\begin{equation}
\label{eq:inversionformulausingnormaloperator}
f
=
c_\dimens(-\Delta)^{1/2}\no f,
\quad c_\dimens=(2\pi\abs{S^{\dimens-2}})^{-1}
\end{equation}
for any $f\in\cdistr(\R^\dimens)$.
Here the fractional Laplacian $(-\Delta)^s$ is defined via the inverse Fourier transform $(-\Delta)^sf=\mathcal{F}^{-1}(\abs{\cdot}^{2s}\hat{f})$ and it is a non-local operator.
As can be seen in equation~\eqref{eq:inversionformulausingnormaloperator}, the normal operator of the X-ray transform is essentially~$(-\Delta)^{-1/2}$ and is inverted by~$(-\Delta)^{1/2}$.

Let $\kernel(x)=\abs{x}^{-\alpha}$ where $\alpha=\dimens-1$ or $\alpha\in\R\setminus\Z$ and $\alpha<\dimens$. We define the Riesz potential $\riesz\colon\cdistr(\R^\dimens)\to\distr(\R^\dimens)$ as
\begin{equation}
\label{eq:normaloperatorofdistribution}
\ip{\riesz f}{\varphi}
=
\ip{f\ast \kernel} {\varphi} 
\end{equation}
for all $f\in\cdistr(\R^\dimens)$ and $\varphi\in \csmooth(\R^\dimens)$.
When $\alpha<\dimens$ then $\kernel$ is locally integrable and thus defines a tempered distribution. The convolution between two distributions is well-defined when at least one of them has compact support. This implies that $\riesz f$ is always defined as a distribution when $f\in\cdistr(\R^\dimens)$.
Especially if $f\in \csmooth(\R^\dimens)$, then
\begin{equation}
\label{eq:normaloperatoroffunction}
\riesz f(x)
=
\int_{\R^\dimens}\frac{f(y)}{\abs{x-y}^{\alpha}}\der y.
\end{equation}
We call~$\kernel$ the kernel of the Riesz potential~$\riesz$.
If $\alpha=\dimens-1$, then equation~\eqref{eq:normaloperatorofdistribution} defines the normal operator of the X-ray transform~$\no$ up to a constant factor 2, see equation \eqref{eq:normaloperatorfortestfunctions}. Extensive treatment of Riesz potentials can be found in many books, see e.g. \cite{HE:integral-geometry-radon-transforms, LA-modern-potential-theory, MI-potential-theory-euclidean-spaces, SA-hypersingular-integrals}.
\NTR{Minor changes and explanation why $\riesz f$ is well-defined.}

\section{Proof of lemma~\ref{lemma:generatingpolynomials}}
\label{sec:proofofpolynomiallemma}

In this section we give a rather technical proof of lemma~\ref{lemma:generatingpolynomials}.
The proof is based on induction and algebraic relations between certain functions and their derivatives.

\begin{proof}[Proof of lemma \ref{lemma:generatingpolynomials}]
We need to show that if $\dimens\geq 2$ and $\alpha>\dimens-2$ or $\alpha\in\R\setminus\Z$, then for any polynomial~$p$ one can express $p(x\abs{x}^{-2})\abs{x}^{-\alpha}$ as a finite linear combination of derivatives of $\kernel(x)=\abs{x}^{-\alpha}$. Let us denote
\begin{equation}
A_i=x_i B, \quad  B=\abs{x}^{-2} \quad \text{and} \quad C=\abs{x}^{-\alpha}.
\end{equation}
Then one can calculate the relations
\begin{equation}
\partial_j A_i=\delta_{ij}B-2A_iA_j, \quad \abs{A}^2=B \quad \text{and} \quad \partial_i C=-\alpha A_iC.
\end{equation}
Let us also define
\begin{equation}
\label{eq:productoffirstorderterms}
D_{i_1\dotso i_n}
=
A_{i_1}\cdot\dotso\cdot A_{i_n}\cdot C=\bigg(\prod_{l=1}^n A_{i_l}\bigg)C, \quad  i_k\in\{1, \dotso, \dimens\}.
\end{equation}  
We would like to express $D_{i_1\dotso i_n}$ for all $n\in\mathbb{N}$ as a finite linear combination of derivatives of $\kernel$. The constant polynomials are given by~$\kernel$ itself. The first derivative is
\begin{equation}
\partial_i \kernel(x)=-\alpha x_i\abs{x}^{-\alpha-2}
=
-\alpha D_i.    
\end{equation}
Whence~$D_i$ can be obtained from first-order derivatives of~$\kernel$. Differentiating~$D_i$ gives
\begin{equation}
\partial_j D_i=\delta_{ij}BC-(2+\alpha)D_{ij}
\end{equation}
and the divergence is
\begin{equation}
\sum_{i=1}^\dimens\partial_i D_i
=
(\dimens-2-\alpha)BC.
\end{equation}
Combining these we obtain 
\begin{equation}
D_{ij}
=
\frac{1}{2+\alpha}\bigg(\bigg(\frac{\delta_{ij}}{\dimens-2-\alpha}\sum_{i=1}^\dimens\partial_iD_i\bigg)-\partial_jD_i\bigg).
\end{equation}
We have thus expressed the terms~$D_{ij}$ as a finite linear combination of the derivatives of the terms~$D_i$ which were multiples of the first-order derivatives of~$\kernel$. Hence~$D_{ij}$ can be expressed as a finite linear combination of second-order derivatives of~$\kernel$.

We claim that~$D_{i_1\dotso i_n}$ is a finite linear combination of~$n$th order derivatives of~$\kernel$ for all $n\in\N$ and we have shown this for $n=0, 1, 2$. The lemma follows from this claim. Let us assume that the claim holds for some $m-1\in\mathbb{N}$. Then $D_{i_1\dotso i_{m-1}}$ is a finite linear combination of $(m-1)$th order derivatives of~$\kernel$. Thus $\partial_{i_m}D_{i_1\dotso i_{m-1}}$ is a finite linear combination of~$m$th order derivatives of~$\kernel$ and a calculation shows that
\begin{equation}
\label{eq:derivativeofd}
\partial_{i_m}D_{i_1\dotso i_{m-1}}=(2-2m-\alpha)D_{i_1\dotso i_m}+\sum_{j=1}^{m-1}\bigg(\delta_{i_m i_j}BC \prod_{\substack{l=1 \\ l\neq j}}^{m-1}A_{i_l}\bigg).
\end{equation}
Let us then calculate the divergence from equation~\eqref{eq:derivativeofd}. We get  
\begin{equation}
\label{eq:divergence}
\sum_{i_k=1}^\dimens\partial_{i_k}D_{i_1\dotso i_k\dotso i_{m-1}}
=
(\dimens-m-\alpha)BC  \ \prod_{\substack{l=1 \\ l\neq k}}^{m-1}A_{i_l}.
\end{equation}
From equations~\eqref{eq:derivativeofd} and~\eqref{eq:divergence} we obtain the following expression for~$D_{i_1\dotso i_m}$
\begin{align}
\frac{1}{2-2m-\alpha}\Bigg(\partial_{i_m}D_{i_1\dotso i_{m-1}}-\frac{1}{\dimens-m-\alpha}\sum_{j=1}^{m-1}\bigg(\delta_{i_m i_j}\sum_{i_j=1}^\dimens\partial_{i_j}D_{i_1\dotso i_j\dotso i_{m-1}}\bigg)\Bigg)
\end{align}  
which is by the induction assumption a finite linear combination of~$m$th order derivatives of~$\kernel$. Thus the claim follows for all $n\in\mathbb{N}$.
\end{proof}

\section{Alternative proofs of the main theorems}
\label{sec:alternativeproofs}
\NTR{Added motivation for this section, added new proofs of unique continuation property, other minor modifications.}

In this section we give alternative proofs to our main theorems, theorem \ref{thm:maintheoremfornormaloperator} and theorem \ref{thm:partialdataproblem}. We believe that presenting several proofs opens more possibilities to generalize the results and gives more tools for solving similar unique continuation problems and partial data problems. 

We prove theorem \ref{thm:maintheoremfornormaloperator} under the stronger assumption $\riesz f|_V=0$ for a slightly larger class of distributions, i.e. rapidly decreasing distributions. We do it in two alternative ways. First proof is based on convolution approximation and density of polynomials. The second approach uses the unique continuation property of the fractional Laplacian. The second proof is short since it relies on a strong result. The unique continuation of $(-\Delta)^s$, $s\in (0, 1)$, is based on technical results about Carleman estimates and Caffarelli-Silvestre extensions~\cite{GSU-calderon-problem-fractional-schrodinger}. 

We then prove theorem \ref{thm:partialdataproblem} first for compactly supported smooth functions using angular Fourier series and density argument based on differentiation of an integral kernel. By a standard mollification argument we obtain the same result for compactly supported distributions. The proof works directly at the level of the X-ray transform and does not use the normal operator at all. Therefore we do not need to use any unique continuation results in the proof of the partial data problem.

We briefly go through our notations. We denote by $\slowly(\R^\dimens)$ the space of polynomially increasing smooth functions, by~$\tempered(\R^\dimens)$ the space of tempered distributions, by~$\rapidly(\R^{\dimens})$ the space of rapidly decreasing distributions and by $H^r(\R^\dimens)$ the fractional $L^2$-Sobolev space of order $r\in\R$. For precise definitions see \cite{HO-topological-vector-spaces, ML-strongly-elliptic-systems, SA:fourier-analysis-distributions, TRE:topological-vector-spaces-distributions}. For us it is enough to know that $\cdistr(\R^\dimens)\subset\rapidly(\R^\dimens)\subset\tempered(\R^\dimens)$ and 
$$
\rapidly(\R^\dimens)\subset\bigcup_{r\in\R}H^r(\R^\dimens).
$$
Rapidly decreasing continuous functions, i.e. continuous functions which decrease faster than any polynomial at infinity, are contained in~$\rapidly(\R^\dimens)$. The convolution operator~$\ast$ is a separately continuous map $\ast\colon\rapidly(\R^\dimens)\times\tempered(\R^\dimens)\rightarrow\tempered(\R^\dimens)$. This implies that the Riesz potential $\riesz f=f\ast\abs{\cdot}^{-\alpha}$ is defined as a distribution when $f\in\rapidly(\R^\dimens)$ and $\alpha<\dimens$. The Fourier transform is a bijective map from $\rapidly(\R^\dimens)$ onto $\slowly(\R^\dimens)$ and the usual convolution formula $\widehat{f\ast g}=\widehat{f}\cdot\widehat{g}$ holds in the sense of distributions when $f\in\rapidly(\R^\dimens)$ and $g\in\tempered(\R^\dimens)$. 

\subsection{Using convolution approximation}

In this section we prove theorem \ref{thm:maintheoremfornormaloperator} under the assumption $\riesz f|_V=0$ first for Schwartz functions. The result follows also for rapidly decreasing distributions by considering the mollifications $f\ast j_{\epsilon}$.

\begin{theorem}
\label{thm:alternativeconvolutionproof}
Let $\alpha=\dimens-1$ or $\alpha\in\R\setminus\Z$ and $\alpha<\dimens$. Let $f\in\rapidly(\R^\dimens)$ and $V\subset\R^\dimens$ any nonempty open set. If $f|_V=\riesz f|_V=0$, then $f=0$.
\end{theorem}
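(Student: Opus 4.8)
The plan is to prove the statement first for Schwartz functions and then to pass to rapidly decreasing distributions by mollification, mirroring the structure of the proof of Theorem~\ref{thm:maintheoremfornormaloperator}. The essential difference is that the compact-support hypothesis is replaced by the interplay between the rapid decay of $f$ and the vanishing of $f$ near a chosen base point; it is this interplay that will make the Kelvin-transformed function compactly supported, which is what the density argument needs. Note that for the Schwartz step it is enough to know that $\riesz f$ vanishes to infinite order at one point of $V$; the stronger hypothesis $\riesz f|_V=0$ is there precisely because vanishing on an open set (unlike infinite-order vanishing at a point) is stable under mollification.

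For the Schwartz case, suppose $f\in\schwartz(\R^\dimens)$. By translation invariance I would assume $0\in V$, so that $f$ vanishes on a ball $B(0,r)\subset V$. Since $f\equiv 0$ near the origin, the kernel singularity is avoided and $\riesz f(x)=\int_{\abs{y}\ge r} f(y)\abs{x-y}^{-\alpha}\,\der y$ is smooth for $x$ near $0$; differentiating under the integral sign (justified by the Schwartz decay of $f$) and using $\riesz f|_V=0$ would give, for every multi-index $\beta$,
\[
0=\partial^{\beta}(\riesz f)(0)=\int f(y)\,(\partial^{\beta}\kernel)(-y)\,\der y .
\]
Because $\kernel$ is even, $\partial^{\beta}\kernel$ has definite parity, so this is equivalent to $\int f(y)\,(\partial^{\beta}\kernel)(y)\,\der y=0$ for all $\beta$. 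Then Lemma~\ref{lemma:generatingpolynomials}, which realizes $p(\Kelvin(y))\kernel(y)$ as a finite linear combination of the $\partial^{\beta}\kernel$, would yield
\[
\int f(y)\,p(\Kelvin(y))\,\abs{y}^{-\alpha}\,\der y=0\qquad\text{for every polynomial }p .
\]

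Next I would make the Kelvin change of variables $y=\Kelvin(z)$, using $\abs{\Kelvin(z)}=\abs{z}^{-1}$ and $\abs{J_{\Kelvin}(z)}=\abs{z}^{-2\dimens}$, turning the identity into $\int g(z)\,p(z)\,\der z=0$ for all $p$, where $g(z)=f(\Kelvin(z))\,\abs{z}^{\alpha-2\dimens}$. The key observation is that $g$ is now compactly supported and integrable: since $f$ vanishes on $B(0,r)$, the composition $f\circ\Kelvin$ vanishes for $\abs{z}>1/r$, while the rapid decay of $f$ at infinity forces $f\circ\Kelvin$, and hence $g$, to vanish to infinite order at $z=0$, which dominates the singular weight $\abs{z}^{\alpha-2\dimens}$. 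Thus $g\in L^1$ with compact support and tests to zero against all polynomials, so density of polynomials in the continuous functions on $\spt(g)$ (Lemma~\ref{lemma:polynomialsaredense}) forces $g\equiv 0$, giving $f\circ\Kelvin=0$ and therefore $f=0$. To remove the smoothness assumption I would mollify a general $f\in\rapidly(\R^\dimens)$ by $f_{\eps}=f\ast j_{\eps}$: the convolutor property of $\rapidly$ gives $f_{\eps}\in\schwartz(\R^\dimens)$, and on the Fourier side one checks $\riesz f_{\eps}=(\riesz f)\ast j_{\eps}$. Writing $V_{\eps}=\{x:B(x,\eps)\subset V\}$, the hypotheses propagate to $f_{\eps}|_{V_{\eps}}=\riesz f_{\eps}|_{V_{\eps}}=0$, with $V_{\eps}\neq\varnothing$ for small $\eps$; the Schwartz case then gives $f_{\eps}=0$, and $f_{\eps}\to f$ in $\tempered(\R^\dimens)$ as $\eps\to 0$ yields $f=0$.

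The hard part will be the Kelvin change of variables: whereas compact support in Theorem~\ref{thm:maintheoremfornormaloperator} makes $f\circ\Kelvin$ automatically supported in a bounded annulus, here I must verify that the rapid decay of $f$ is converted, through the inversion, into infinite-order vanishing at $z=0$ strong enough to absorb the kernel singularity $\abs{z}^{\alpha-2\dimens}$, so that the density-of-polynomials argument still closes. By comparison the mollification step should be routine once one confirms (most cleanly via the Fourier transform and the convolution theorem for $\rapidly\times\tempered$) that $\riesz$ commutes with $\ast j_{\eps}$ and that both vanishing conditions survive on the shrunken set $V_{\eps}$.
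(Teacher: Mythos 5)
Your proposal is correct and follows essentially the same route as the paper's proof: establish the Schwartz case by differentiating $\riesz f$ at a point of $V$, convert the vanishing moments via Lemma~\ref{lemma:generatingpolynomials} and the Kelvin transform into a compactly supported continuous function (using rapid decay to tame the singularity at the origin) that integrates to zero against all polynomials, and then handle $\rapidly(\R^\dimens)$ by mollification and the identity $\riesz(f\ast j_{\eps})=(\riesz f)\ast j_{\eps}$. The only cosmetic differences are that the paper differentiates a smoothened kernel $g$ via the convolution lemmas rather than differentiating under the integral sign, and it invokes the Stone--Weierstrass theorem (the appropriate reference for uniform density of polynomials on a compact set, rather than Lemma~\ref{lemma:polynomialsaredense}) for the final density step.
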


\begin{proof}
We can assume that $0\in V$. Let first $f\in\schwartz(\R^\dimens)$. Like in the proof of theorem \ref{thm:maintheoremfornormaloperator} we smoothen the kernel $\kernel$ near the origin, let this smoothened version be $g\in C^{\infty}(\R^\dimens)$. There is $\epsilon>0$ such that $(f\ast g)|_{B(0, \epsilon)}=(f\ast\kernel)|_{B(0, \epsilon)}$. It holds that $\partial^{\beta}(f\ast g)=f\ast\partial^{\beta}g$ where by lemma \ref{lemma:generatingpolynomials} one obtains all the polynomials $p$ in the form $p(K(x))\kernel(x)$ by taking finite linear combinations of $\partial^{\beta}g$. Since $f$ is not supported in a ball $B$ centered at the origin, we can use the Kelvin transform to obtain
\begin{align*}
0=\int_{B^c}f(y)p(y\abs{y}^{-2})\abs{y}^{-\alpha}\der y=\int_{\widetilde{B}\setminus \{0\}}f(x\abs{x}^{-2})p(x)\abs{x}^{\alpha}\abs{J_K(x)}\der x
\end{align*}
where $\widetilde{B}$ is some closed ball centered at the origin. One can calculate that $\abs{J_K(x)}=\abs{x}^{-2\dimens}$ (see \cite[Remark 4.2]{GSU-calderon-problem-fractional-schrodinger}). Since $f$ goes rapidly to zero at infinity, we can extend the function $x\mapsto f(x\abs{x}^{-2})\abs{x}^{\alpha}\abs{J_K(x)}$ continuously to zero and we call this extension~$\widetilde{f}$. We obtain
$$
\int_{\widetilde{B}}\widetilde{f}(x)p(x)\der x=0
$$
for all polynomials $p$. Since $\widetilde{f}$ is continuous and $\widetilde{B}$ is compact, by the Stone-Weierstrass theorem $\widetilde{f}=0$. This implies $f=0$.

Then let $f\in\rapidly(\R^\dimens)$. Denote by $j_{\epsilon}\in\csmooth(\R^\dimens)$ the standard mollifier and consider the mollifications $f_{\epsilon}=f\ast j_{\epsilon}\in\schwartz(\R^\dimens)$. Since $\riesz(f\ast j_{\epsilon})=\riesz f\ast j_{\epsilon}$ it follows that $f_{\epsilon}|_W=\riesz f_{\epsilon}|_W=0$ for small enough $\epsilon>0$ and $W\subset V$ open. By the first part of the proof $f_{\epsilon}=0$ for small $\epsilon>0$. This implies $f=0$ since $f_{\epsilon}\rightarrow f$ as distributions in $\tempered(\R^\dimens)$ when $\epsilon\rightarrow 0$.
\end{proof}

We remark that theorem \ref{thm:alternativeconvolutionproof} implies uniqueness for the partial data problem (theorem \ref{thm:partialdataproblem}) when $f$ is a continous function which decreases faster than any polynomial. We can thus relax the assumption of compact support to rapid decay at infinity in theorem \ref{thm:partialdataproblem}.

\subsection{Using unique continuation of the fractional Laplacian}
\label{sec:alternativeproofrieszpotential}

Here we give an alternative proof for a modified version of theorem \ref{thm:maintheoremfornormaloperator} using Fourier analysis and unique continuation of $(-\Delta)^s$ in $H^{r}(\R^\dimens)$, $r\in\R$, when $0<s<1$. The unique continuation of~$(-\Delta)^s$ is proved in \cite{GSU-calderon-problem-fractional-schrodinger}.

\begin{theorem}
\label{thm:alternativeproofrieszpotential}
Let $f\in\rapidly(\R^{\dimens})$, $V\subset\R^{\dimens}$ any nonempty open set and $0<\alpha<\dimens$ such that $(\alpha-\dimens)/2\not\in\Z$. If $f|_V=0$ and $\riesz f|_V=0$, then $f=0$.
\end{theorem}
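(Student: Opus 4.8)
The plan is to connect the Riesz potential $\riesz$ to the fractional Laplacian $(-\Delta)^s$ and then invoke the known unique continuation property (UCP) for $(-\Delta)^s$ from \cite{GSU-calderon-problem-fractional-schrodinger}. As noted in the introduction, $\riesz f = (-\Delta)^{-s} f$ with $s = (\dimens - \alpha)/2$, so the condition $0 < \alpha < \dimens$ guarantees $0 < s < \dimens/2$. First I would reduce to the range $s \in (0,1)$ where the cited UCP result is available: the hypothesis $(\alpha - \dimens)/2 \notin \Z$ ensures $s \notin \Z$, and since the Fourier multiplier $\abs{\cdot}^{-2s}$ is well-behaved away from integer powers, one can peel off an integer power of $(-\Delta)$ to land in the fundamental strip. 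Concretely, write $s = m + s_0$ where $m \in \N_0$ and $s_0 \in (0,1)$, and set $u = (-\Delta)^{-m}(\riesz f)$ up to constants, so that $u = (-\Delta)^{-s_0} f$ in the sense of tempered distributions via the Fourier transform.

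The key steps, in order, are as follows. First I would move everything to the Fourier side: since $f \in \rapidly(\R^\dimens)$, its Fourier transform lies in $\slowly(\R^\dimens)$, and the convolution formula $\widehat{\riesz f} = c\, \abs{\cdot}^{-2s} \widehat{f}$ holds as distributions (with the appropriate constant from the Fourier transform of the Riesz kernel), so that $w := \riesz f$ satisfies $\widehat{w}(\xi) = c\,\abs{\xi}^{-2s}\widehat{f}(\xi)$. Second, I would verify the regularity needed to apply the UCP: one must check that $f$ and $w$ lie in suitable Sobolev spaces $H^r(\R^\dimens)$, which follows from $\rapidly(\R^\dimens) \subset \bigcup_{r} H^r(\R^\dimens)$ as recorded in the excerpt. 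Third, I would set $v := (-\Delta)^{s_0} w$ (again via Fourier multipliers, after removing the integer power $(-\Delta)^{m}$), arranging that $v$ and $f$ differ only by an application of $(-\Delta)^{s_0}$ and its inverse so that the pair satisfies the hypotheses of the fractional UCP: namely that both $v$ and $(-\Delta)^{s_0} v$ (equivalently $f$, up to constants) vanish on the open set $V$. The hypotheses $f|_V = 0$ and $\riesz f|_V = 0$ translate precisely into such simultaneous vanishing. Finally, the UCP for $(-\Delta)^{s_0}$ then forces $v \equiv 0$, hence $\widehat{f} = 0$ and $f = 0$.

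The main obstacle I anticipate is the bookkeeping around \emph{non-integer reduction and the branch/constant of the Riesz kernel's Fourier transform}. One must handle the possibility $s \geq 1$ cleanly: stripping off $(-\Delta)^m$ is harmless as a Fourier multiplier but one has to confirm that the vanishing conditions on $V$ are preserved, since multiplication by $\abs{\xi}^{2m}$ on the Fourier side corresponds to applying the local operator $(-\Delta)^m$, which does preserve vanishing on open sets — this is exactly why the \emph{local} part can be factored out without disturbing the hypotheses. The condition $(\alpha - \dimens)/2 \notin \Z$ is precisely what keeps $s_0$ strictly inside $(0,1)$ and avoids the degenerate integer case where the multiplier becomes polynomial and the Riesz-potential interpretation breaks down. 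A secondary technical point is ensuring that all the Fourier-multiplier manipulations are justified for tempered distributions of the given decay class, but this is routine given the mapping properties of $\mathcal{F}$ on $\rapidly(\R^\dimens)$ and $\slowly(\R^\dimens)$ stated above; the genuinely substantive input is entirely the cited unique continuation theorem for $(-\Delta)^{s_0}$, which is why this proof is short.
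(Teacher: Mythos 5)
Your overall strategy coincides with the paper's: pass to the Fourier side, identify $\riesz f$ with a negative fractional power of the Laplacian, strip off an integer power of $(-\Delta)$ using its locality, and invoke \cite[Theorem 1.2]{GSU-calderon-problem-fractional-schrodinger}. However, your specific reduction has a genuine gap in \emph{which function} the unique continuation theorem is applied to. You apply it to the auxiliary function $v=(-\Delta)^{-s_0}f$ (up to constants, $v=(-\Delta)^{m}\riesz f$), using that $v|_V=0$ and that $(-\Delta)^{s_0}v=f$ vanishes on $V$. But the cited theorem requires its subject to lie in some $H^r(\R^\dimens)$, and this cannot be guaranteed for $v$: on the Fourier side $\hat v(\xi)=c\abs{\xi}^{-2s_0}\hat f(\xi)$, and since $\hat f$ is continuous with, in general, $\hat f(0)\neq 0$, the integral $\int_{\abs{\xi}<1}\abs{\xi}^{-4s_0}\abs{\hat f(\xi)}^2\der \xi$ diverges whenever $4s_0\geq\dimens$, so that $v$ lies in \emph{no} $H^r(\R^\dimens)$. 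This is not a fringe case: for the normal operator of the X-ray transform in $\R^2$ (that is, $\dimens=2$, $\alpha=1$) one has $m=0$, $s_0=1/2$, $4s_0=\dimens$, and the integral diverges logarithmically — precisely the case the theorem is most needed for. Your step 2 papers over this: the inclusion $\rapidly(\R^\dimens)\subset\bigcup_r H^r(\R^\dimens)$ applies to $f$ but not to $w=\riesz f$, which is not rapidly decreasing (it decays only like $\abs{x}^{-\alpha}$ and, more to the point, its Fourier transform has a nonintegrable-in-$L^2$ singularity at the origin). Nor can you deduce $\hat f(0)=0$ from the hypotheses without circularity. (Your route does happen to go through when $\dimens\geq 4$, since then $4s_0<4\leq\dimens$, but not in the low dimensions most relevant here.)

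The repair is to keep $f$ itself as the function fed into the unique continuation theorem, which is exactly what the paper does. Choose $k\in\N$ with $-k<(\alpha-\dimens)/2<-k+1$ and apply the \emph{local} operator $(-\Delta)^{k}$ to $\riesz f=c_\dimens(-\Delta)^{(\alpha-\dimens)/2}f$: since $\riesz f|_V=0$ and $(-\Delta)^{k}$ is a differential operator, this gives $(-\Delta)^{s}f|_V=0$ with $s=k+(\alpha-\dimens)/2\in(0,1)$, while $f|_V=0$ and $f\in\rapidly(\R^\dimens)\subset\bigcup_r H^r(\R^\dimens)$ hold by hypothesis; then \cite[Theorem 1.2]{GSU-calderon-problem-fractional-schrodinger} applies directly to $f$ and yields $f=0$. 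In your notation this amounts to using the exponent $1-s_0$ on $f$ rather than $s_0$ on $(-\Delta)^{-s_0}f$; the two bookkeepings are not interchangeable, because only $f$ has the required Sobolev regularity. A minor additional slip: you set $u=(-\Delta)^{-m}(\riesz f)$ where you need $u=(-\Delta)^{+m}(\riesz f)$; only the positive integer power is local, and locality is what preserves the vanishing on $V$, as you yourself note later.
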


\begin{proof}
There is $k\in\N$ such that $-k<(\alpha-\dimens)/2<-k+1$. Using the convolution property of the Fourier transform we can write
\begin{align*}
\riesz f
=
f\ast\abs{\cdot}^{-\alpha}
=
c_\dimens\mathcal{F}^{-1}(\mathcal{F}(f\ast\abs{\cdot}^{-\alpha}))
=
c_\dimens\mathcal{F}^{-1}(\hat{f}\abs{\cdot}^{\alpha-\dimens})
=
c_\dimens(-\Delta)^{\frac{\alpha-\dimens}{2}}f,
\end{align*}
where $c_\dimens>0$ is a constant depending on dimension. Since $(-\Delta)^{\frac{\alpha-\dimens}{2}}f$ is a tempered distribution, again by the properties of the Fourier transform it follows that $(-\Delta)^k(-\Delta)^{\frac{\alpha-\dimens}{2}}f=(-\Delta)^{k+\frac{\alpha-\dimens}{2}}f=(-\Delta)^s f$ where $s=k+(\alpha-\dimens)/2\in (0, 1)$. Since $(-\Delta)^k$ is a local operator and $(-\Delta)^{\frac{\alpha-\dimens}{2}}f$ vanishes in the open set $V$, we obtain the conditions $f|_V=0$ and $(-\Delta)^s f|_V=0$. Now $f\in\rapidly(\R^\dimens)$ which implies $f\in H^r(\R^\dimens)$ for some $r\in\R$. By \cite[Theorem 1.2]{GSU-calderon-problem-fractional-schrodinger} we obtain $f=0$.
\end{proof}

We remark that theorem \ref{thm:alternativeproofrieszpotential} implies the unique continuation of the normal operator of the X-ray transform in dimensions $\dimens\geq 2$ since in that case $0<\dimens-1=\alpha<\dimens$ and $(\alpha-\dimens)/2=-1/2\not\in\Z$. 

\subsection{Angular Fourier series approach}
\label{sec:alternativeproofpartialdata}

In this section we give another proof of theorem~\ref{thm:partialdataproblem}. We assume without loss of generality that $f$ is supported in $\overline{B}(0, R^{\prime})\setminus B(0, R)$ for some $R^{\prime}>R>0$ and that $0\in V$.
The proof is based on a similar idea as before, differentiation of an integral kernel and density of polynomials.
However, now we study the X-ray transform directly and exploit the underlying spherical symmetry by using angular Fourier series expansion.
\NTR{Generalized this proof to compactly supported distributions by convolution approximation, corrected the notation $I\rightarrow \xrt$ and other minor modifications.}

In the next theorem, when $f\in C_c(\R^\dimens)$ it would be enough to assume that the X-ray transform~$\xrt f$ vanishes to infinite order on all lines through the origin, i.e. $\partial_r^n (\xrt f)(r, \theta)|_{r=0}=0$ for all $n\in\N$. This is a similar assumption that we used in theorem \ref{thm:maintheoremfornormaloperator}.
\NTR{Clarified what vanishing to infinite order on all lines through the origin means for $\xrt f$.}

\begin{theorem}
\label{thm:alternativemaintheorem}
Fix any $0<\epsilon<R<R^{\prime}$. Let $f\in\cdistr(\R^\dimens)$ such that $\spt(f)\subset\overline{B}(0,R^{\prime})\setminus B(0,R)$. If $f$ integrates to zero over all lines in $B(0,R^{\prime})$ that meet $B(0,\epsilon)$, then $f=0$.
\end{theorem}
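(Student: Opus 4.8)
The plan is to reduce Theorem~\ref{thm:alternativemaintheorem} to the case of smooth functions by a standard mollification argument, and then to handle the smooth case via angular Fourier series and a density argument on the radial profiles. First I would reduce to smooth $f$: given $f\in\cdistr(\R^\dimens)$ supported in the annulus $\overline{B}(0,R')\setminus B(0,R)$, I would mollify, setting $f_\delta=f\ast j_\delta$ with $j_\delta$ the standard mollifier. For small $\delta$ the support of $f_\delta$ still lies in a slightly larger annulus $\overline{B}(0,R'')\setminus B(0,R''')$ with $R'''>\epsilon$, and one checks that $\xrt f_\delta$ still vanishes on all lines meeting a slightly smaller ball $B(0,\epsilon')$ (using that the X-ray transform commutes with convolution in a suitable sense, or by testing against the translated, mollified line measures). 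Since $f_\delta\to f$ as distributions, proving $f_\delta=0$ for all small $\delta$ yields $f=0$. So it suffices to treat $f\in\csmooth(\R^\dimens)$.

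For smooth $f$ supported in the annulus, I would expand in angular Fourier modes (spherical harmonics when $\dimens\geq 3$, or the circular Fourier basis when $\dimens=2$), writing $f(x)=\sum_k f_k(\abs{x})Y_k(x/\abs{x})$. Because the X-ray transform is rotation-equivariant, the constraint that $\xrt f$ vanishes on every line meeting $B(0,\epsilon)$ decouples across angular modes, and for each fixed mode the vanishing condition becomes a one-dimensional integral identity on the radial profile $f_k$. The key point, as hinted in the remark preceding the theorem, is that vanishing of $\xrt f$ on all lines through $B(0,\epsilon)$ (in fact already vanishing to infinite order on lines through the origin) forces, after differentiating the kernel in the line parameter $r$ and evaluating at $r=0$, a family of moment conditions $\int r^n (\text{weight})\,f_k(r)\,\der r=0$ for all $n\in\N$. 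The weight will involve a power of $r$ coming from the geometry of lines and the angular index, and the substitution to the Kelvin-type variable $\rho=\abs{x}^{-2}$ (or a rescaling of the radial variable) turns these into polynomial moment conditions against a continuous, compactly supported radial density.

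Having obtained $\int p(\rho)\,\tilde f_k(\rho)\,\der\rho=0$ for all polynomials $p$ on a compact interval, I would invoke density of polynomials (lemma~\ref{lemma:polynomialsaredense}) or directly the Stone--Weierstrass theorem, exactly as in the proof of theorem~\ref{thm:alternativeconvolutionproof}, to conclude that each transformed radial profile $\tilde f_k$ vanishes identically, hence $f_k\equiv 0$ for every $k$, and therefore $f=0$. The reduction to infinite-order vanishing at $r=0$ is justified because the radial profiles are smooth and compactly supported away from the origin, so the relevant integral kernel (the restriction of the line integral as a function of the signed distance $r$ from the origin) is smooth near $r=0$ and its Taylor coefficients in $r$ are precisely the moments.

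The main obstacle I expect is the explicit computation of the radial integral kernel and verifying that differentiating it in $r$ at $r=0$ produces exactly the full family of polynomial moments for each angular mode, with the correct non-degenerate weight. Concretely, one must show that the Chebyshev-type polynomials appearing in the expansion of the line-integral kernel in the angular variable span all polynomials after the radial rescaling, so that no moment is missed and the density argument applies mode by mode. This is the place where the spherical symmetry is genuinely exploited and where the calculation is most delicate; the mollification reduction and the final Stone--Weierstrass step are routine by comparison.
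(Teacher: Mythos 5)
Your plan follows the same route as the paper's own proof: a mollification reduction to the smooth/continuous case (the paper runs this reduction at the end rather than the beginning, using Helgason's convolution $\times$ on the space of lines and the identity $\xrt^*(g\times\varphi)=g\ast\xrt^*\varphi$, which is the precise form of your ``testing against mollified line measures''), then an angular Fourier decomposition, an Abel-type integral equation for each mode with kernel $K_k(z,y)=T_k(z/y)[1-(z/y)^2]^{-1/2}$, differentiation of the kernel at $z=0$ to extract moment conditions, the substitution $s=y^{-2}$, and density of polynomials. The only cosmetic difference is that the paper reduces to $\dimens=2$ by slicing with $2$-planes through the origin instead of invoking spherical harmonics in higher dimensions.

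However, there is a genuine gap, and it sits exactly at the point you defer as ``the main obstacle'': you must prove, not merely expect, that differentiating the kernel at $z=0$ yields enough nonzero moments. Writing $\partial_z^n K_k(z,y)|_{z=0}=A_k^n y^{-n}$, two things happen. First, your claim that one obtains ``exactly the full family of polynomial moments'' is false as stated: parity considerations (the Chebyshev coefficient $t_k^l$ vanishes when $l-k$ is odd, and the $k=0$ coefficient $A_0^m$ vanishes for odd $m$) force $A_k^n=0$ whenever $n\not\equiv k \pmod 2$, so for each mode only one parity class of moments survives. Second, and this is the crux, one must show that the surviving coefficients are eventually nonzero: for each $k$ there is $N(k)$ such that $A_k^n>0$ for all $n\geq N(k)$ with $n\equiv k\pmod 2$. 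The paper proves this by an explicit computation, expanding $A_k^n=\sum_{l}\binom{n}{l}l!\,t_k^l A_0^{n-l}$, using $A_0^n=(n-1)!!^2$ for even $n$, the normalization $\sum_l t_k^l=T_k(1)=1$, and limits of double-factorial ratios to get positivity for large $n$. Without this step the argument collapses: a priori the coefficients of some mode $k$ could vanish for all $n$, that mode would be invisible to all the moment conditions, and the density argument would say nothing about $a_k$. (The surviving one-parity family $\int_R^1 y^{-N(k)-2m}a_k(y)\,\der y=0$, $m\in\N$, does suffice after the substitution $s=y^{-2}$, but that is precisely because the paper establishes nonvanishing along an arithmetic progression of step $2$.) So your outline is the right one, but the unproven nonvanishing statement is the mathematical content of the theorem, not a routine verification.
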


\begin{proof}
Without loss of generality we can assume that $R^{\prime}=1$. Let first $f\in C_c(\R^\dimens)$. By intersecting the origin with 2-planes it is enough to prove the result in two dimensions. The function~$f$ can be expressed as an angular Fourier series
\begin{equation}
\label{eq:angularfourierexpansion}
f(r,\theta)
=
\sum_{k\in\Z}e^{ik\theta}a_k(r).
\end{equation}
Our goal is to show that $a_k=0$ for all $k\in\Z$. When we parameterize the lines in~$\R^2$ by their closest point to the origin and use polar coordinates for these points, we find 
\begin{equation}
\xrt f(r,\theta)
=
\sum_{k\in\Z}e^{ik\theta}\A_{\abs{k}}a_k(r),
\end{equation}
where~$\A_k$ is the generalized Abel transform defined by
\begin{equation}
\label{eq:abeltransform}
\A_kg(z)
=
2\int_z^1K_k(z,y)g(y)\der y.
\end{equation}
Here the kernel is $K_k(z,y)=T_k(z/y)[1-(z/y)^2]^{-1/2}$ and $T_k$ are the Chebyshev polynomials.

We know that $f(r,\theta)=0$ when $r<R$ and $\xrt f(r,\theta)=0$ when $r<\eps$. For the Fourier components~$a_k(r)$ this means that for every $k\in\Z$ we have $a_k(r)=0$ for $r<R$ and $\A_ka_k(r)=0$ for $r<\eps$. Hence we get
\begin{equation}
\label{eq:kernelintegraliszero}
\int_R^1K_k(z,y)a_k(y)\der y
=
0
\end{equation}
for every $z\in[0,\eps)$. Like in the proof of theorem~\ref{thm:maintheoremfornormaloperator}, we differentiate the integral kernel~$n$ times in~\eqref{eq:kernelintegraliszero} with respect to~$z$ and evaluate at~$z=0$ to obtain
\begin{equation}
\label{eq:derivativesofthekernelzero}
\int_R^1 D_k^n(y)a_k(y)\der y
=
0
\end{equation}
for all $n\in\N$ and $k\in\Z$, where $D_k^n(y)=\partial_z^n K_k(z,y)|_{z=0}$.

By scaling arguments  $D_k^n(y)=A_k^ny^{-n}$ for some numbers~$A_k^n$. The term $k=0$ is
\begin{equation}
\label{eq:zerothtermofderivative}
A_0^n
=
\begin{cases}
(n-1)!!^2,&n\text{ even},\\
0,&n\text{ odd}.
\end{cases}
\end{equation}
We denote the coefficient of~$x^l$ in~$T_k(x)$ by~$t_k^l$.
The~$l$th derivative of~$T_k(x)$ at~$x=0$ is $l!t_k^l$. The coefficients also satisfy
\begin{equation}
\label{eq:chebyshevcoefficients}
\sum_{l=0}^kt_k^l
=
T_k(1)
=
1.
\end{equation}
By basic properties of Chebyshev polynomials $t_k^l=0$ if $l-k$ is odd or $l>k$. Using $K_k(z,y)=T_k(z/y)K_0(z,y)$ and the product rule of higher order derivatives we find
\begin{equation}
A_k^n
=
\sum_{l=0}^n
{n\choose l}l!t_k^lA_0^{n-l}.
\end{equation}
By parity properties it is clear that~$A_k^n$ vanishes unless both~$n$ and~$l$ are even or both are odd. 

We will show that for any $k\in\N$ there is a number~$N(k)$ so that $A_k^n>0$ when $n\geq N(k)$ and parity is right. For $k=0$ this follows from equation~\eqref{eq:zerothtermofderivative} with $N(k)=0$. Consider first the case when~$n$ and~$k$ are both even and assume $n>k$. A calculation shows that
\begin{equation}
A_k^n=n!
\frac{(n-1)!!}{n!!}
\sum_{m=0}^{k/2}
\left[
t_k^{2m}
+
t_k^{2m}
\left(
\frac{(n-2m-1)!!n!!}{(n-2m)!!(n-1)!!}
-1
\right)
\right].
\end{equation}
There are only finitely many terms in the sum, and for every~$m$ we have
\begin{equation}
\lim_{n\to\infty}
\frac{(n-2m-1)!!n!!}{(n-2m)!!(n-1)!!}
=
1.
\end{equation}
Equation~\eqref{eq:chebyshevcoefficients} implies $\sum_{m=0}^{k/2}t_k^{2m}=1$ so that
\begin{equation}
\lim_{n\to\infty}
\sum_{m=0}^{k/2}
\left[
t_k^{2m}
+
t_k^{2m}
\left(
\frac{(n-2m-1)!!n!!}{(n-2m)!!(n-1)!!}
-1
\right)
\right]
=
1.
\end{equation}
Therefore $A_k^n>0$ for sufficiently large~$n$ as claimed. Similarly one can show for odd indices that
\begin{equation}
A_k^n=n!
\frac{(n-2)!!}{(n-1)!!}
\sum_{m=0}^{(k-1)/2}
\left[
t_k^{2m+1}
+
t_k^{2m+1}
\left(
\frac{(n-2m-2)!!(n-1)!!}{(n-2m-1)!!(n-2)!!}
-1
\right)
\right].
\end{equation}
With the same limit argument we get $A_k^n>0$ for large~$n$.

We fix any $k\in\Z$ and use~\eqref{eq:derivativesofthekernelzero} to show that $a_k=0$. By symmetry it suffices to consider $k\geq 0$. We found~$N(k)$ so that $A_k^n\neq 0$ for $n\geq N(k)$ when $n-N(k)$ is even. We find
\begin{equation}
\int_R^1 y^{-N(k)-2m}a_k(y)\der y
=
0
\end{equation}
for every $m\in\N$.
By linearity 
\begin{equation}
\int_R^1 y^{-N(k)}p(y^{-2})a_k(y)\der y
=
0
\end{equation}
for any polynomial~$p$.
Changing variable to~$s=y^{-2}$ and defining new coefficients $\tilde a_k(s)=s^{N(k)/2-3/2}a_k(s^{-1/2})$, we obtain
\begin{equation}
\int_1^{R^{-1/2}} p(s)\tilde a_k(s)\der s
=
0.
\end{equation}
By density of polynomials $\tilde a_k(s)=0$ for all $s\in[1, R^{-1/2}]$.
This implies  $a_k=0$ for all $k\in\Z$ and hence $f=0$.

Then let $f\in\cdistr(\R^\dimens)$ and consider the mollifications $f\ast j_{\epsilon}\in \csmooth(\R^\dimens)$. Following Helgason \cite{HE:integral-geometry-radon-transforms} we define the ``convolution'' 
$$
(g\times\varphi)(z, \theta)=\int_{\R^\dimens}g(y)\varphi(z-y, \theta)\der y
$$
where $g\in\csmooth(\R^\dimens)$ and $\varphi\in\csmooth(\Gamma)$. By a simple calculation one can show that $\xrt^*(g\times\varphi)=g\ast\xrt^*\varphi$. Using the properties of the convolutions $\ast$ and $\times$ we obtain 
\begin{align*}
\ip{\xrt(f\ast j_{\epsilon})}{\varphi}=\ip{f\ast j_{\epsilon}}{\xrt^*\varphi}=\ip{f}{j_{\epsilon}\ast \xrt^*\varphi}=\ip{f}{\xrt^*(j_{\epsilon}\times\varphi)}=\ip{\xrt f}{j_{\epsilon}\times\varphi}.
\end{align*}
Thus for small enough $\epsilon>0$ and $\tilde{R}>0$ we get that $(f\ast j_{\epsilon})|_{B(0, \tilde{R})}=0$ and $\xrt(f\ast j_{\epsilon})$ vanishes on all lines which intersect $B(0, \epsilon)$. The first part of the proof implies $f\ast j_{\epsilon}=0$ for small $\epsilon>0$. The claim follows since $f\ast j_{\epsilon}\rightarrow f$ in $\cdistr(\R^\dimens)$ when $\epsilon\rightarrow 0$.
\end{proof}

We remark that the assumption that $f$ is supported away from the origin is crucial since it turns a Volterra integral equation into a Fredholm integral equation. This simplifies the derivatives of expression \eqref{eq:abeltransform}.

\bibliography{refs} 

\begin{thebibliography}{10}

\bibitem{BA-long-waves-anisotropy-by-layering}
G.~E. Backus.
\newblock Long-wave elastic anisotropy produced by horizontal layering.
\newblock {\em Journal of Geophysical Research (1896--1977)},
  67(11):4427--4440, 1962.

\bibitem{BO-nonuniqueness-weighted-radon-transform}
J.~Boman.
\newblock An example of non-uniqueness for a generalized {R}adon transform.
\newblock {\em Journal d'Analyse Math{\'e}matique}, 61(1):395--401, 1993.

\bibitem{BQ-support-theorems-analytic-radon-transforms}
J.~Boman and E.~T. Quinto.
\newblock Support theorems for real-analytic {R}adon transforms.
\newblock {\em Duke Math. J.}, 55(4):943--948, 1987.

\bibitem{CNDK-solving-interior-problem-ct-with-apriori-knowledge}
M.~Courdurier, F.~Noo, M.~Defrise, and H.~Kudo.
\newblock Solving the interior problem of computed tomography using \textit{a
  priori} knowledge.
\newblock {\em Inverse Problems}, 24(6):065001, 2008.

\bibitem{CL-decade-of-shear-wave-splitting}
S.~Crampin and J.~H. Lovell.
\newblock {A decade of shear-wave splitting in the Earth's crust: what does it
  mean? what use can we make of it? and what should we do next?}
\newblock {\em Geophysical Journal International}, 107(3):387--407, 1991.

\bibitem{CRE-anisotropy-inner-core}
K.~C. {Creager}.
\newblock {Anisotropy of the inner core from differential travel times of the
  phases PKP and PKIKP}.
\newblock {\em Nature}, 356:309--314, 1992.

\bibitem{deI:abel-transforms-x-ray-tomography}
M.~V. de~Hoop and J.~Ilmavirta.
\newblock {Abel transforms with low regularity with applications to X-ray
  tomography on spherically symmetric manifolds}.
\newblock {\em Inverse Problems}, 33(12):124003, 2017.

\bibitem{deIK-spectral-rigidity}
M.~V. {de Hoop}, J.~{Ilmavirta}, and V.~{Katsnelson}.
\newblock {Spectral rigidity for spherically symmetric manifolds with
  boundary}.
\newblock 2017.
\newblock arXiv:1705.10434.

\bibitem{deSZ-mixed-ray-transform}
M.~V. {de Hoop}, T.~{Saksala}, and J.~{Zhai}.
\newblock {Mixed ray transform on simple 2-dimensional {R}iemannian manifolds}.
\newblock {\em Proc. Amer. Math. Soc.}, 2019.
\newblock Published electronically.

\bibitem{DA-PREM-model}
A.~M. Dziewonski and D.~L. Anderson.
\newblock {Preliminary reference Earth model}.
\newblock {\em Physics of the Earth and Planetary Interiors}, 25(4):297--356,
  1981.

\bibitem{FF-unique-continuation-fractional-ellliptic-equations}
M.~M. Fall and V.~Felli.
\newblock Unique continuation property and local asymptotics of solutions to
  fractional elliptic equations.
\newblock {\em Comm. Partial Differential Equations}, 39(2):354--397, 2014.

\bibitem{FF-unique-continuation-higher-laplacian}
V.~{Felli} and A.~{Ferrero}.
\newblock {Unique continuation principles for a higher order fractional
  {L}aplace equation}.
\newblock 2018.
\newblock {arXiv:1809.09496}.

\bibitem{GR-fractional-laplacian-strong-unique-continuation}
M.-{\'A}. {Garc{\'\i}a-Ferrero} and A.~{R{\"u}land}.
\newblock Strong unique continuation for the higher order fractional
  {L}aplacian.
\newblock {\em Mathematics in Engineering}, 1(4):715--774, 2019.

\bibitem{GRSU-fractional-calderon-single-measurement}
T.~{Ghosh}, A.~{R{\"u}land}, M.~{Salo}, and G.~{Uhlmann}.
\newblock {Uniqueness and reconstruction for the fractional Calder\'on problem
  with a single measurement}.
\newblock 2018.
\newblock {arXiv:1801.04449}.

\bibitem{GSU-calderon-problem-fractional-schrodinger}
T.~{Ghosh}, M.~{Salo}, and G.~{Uhlmann}.
\newblock {The {C}alder\'on problem for the fractional {S}chr\"odinger
  equation}.
\newblock 2016.
\newblock arXiv:1609.09248.

\bibitem{HE:integral-geometry-radon-transforms}
S.~{Helgason}.
\newblock Integral {G}eometry and {R}adon {T}ransforms.
\newblock Springer, {F}irst edition, 2011.

\bibitem{HE-inverse-kinematic-problem}
G.~{Herglotz}.
\newblock {\"U}ber die {E}lastizit\"at der {E}rde bei {B}er\"ucksichtigung
  ihrer variablen {D}ichte.
\newblock {\em Zeitschr. f\"ur Math. Phys.}, 52:275--299, 1905.

\bibitem{HO-topological-vector-spaces}
J.~{Horv\'ath}.
\newblock {Topological Vector Spaces and Distributions}.
\newblock volume~{I}. {Addison-Wesley}, 1966.

\bibitem{IM:integral-geometry-review}
J.~Ilmavirta and F.~Monard.
\newblock Integral geometry on manifolds with boundary and applications.
\newblock In R.~Ramlau and O.~Scherzer, editors, {\em The Radon Transform: The
  First 100 Years and Beyond}. de Gruyter, 2019.

\bibitem{GI-tensor-tomography-periodic-slabs}
J.~Ilmavirta and G.~Uhlmann.
\newblock Tensor tomography in periodic slabs.
\newblock {\em Journal of Functional Analysis}, 275(2):288--299, 2018.

\bibitem{KKW-stability-of-interior-problems}
E.~Katsevich, A.~Katsevich, and G.~Wang.
\newblock Stability of the interior problem with polynomial attenuation in the
  region of interest.
\newblock {\em Inverse Problems}, 28(6):065022, 2012.

\bibitem{KEQ-wavelet-methods-ROI-tomography}
E.~Klann, E.~T. Quinto, and R.~Ramlau.
\newblock Wavelet methods for a weighted sparsity penalty for region of
  interest tomography.
\newblock {\em Inverse Problems}, 31(2):025001, 22, 2015.

\bibitem{KRI-support-theorem-analytic-metric}
V.~P. Krishnan.
\newblock A support theorem for the geodesic ray transform on functions.
\newblock {\em J. Fourier Anal. Appl.}, 15(4):515--520, 2009.

\bibitem{KQ-microlocal-analysis-in-tomography}
V.~P. Krishnan and E.~T. Quinto.
\newblock {Microlocal Analysis in Tomography}.
\newblock In O.~Scherzer, editor, {\em Handbook of Mathematical Methods in
  Imaging}, pages 847--902. Springer, New York, 2015.

\bibitem{KLM-on-local-tomography}
P.~Kuchment, K.~Lancaster, and L.~Mogilevskaya.
\newblock On local tomography.
\newblock {\em Inverse Problems}, 11(3):571--589, 1995.

\bibitem{KWA-ten-definitions-fractional-laplacian}
M.~Kwa\'snicki.
\newblock {Ten equivalent definitions of the fractional Laplace operator}.
\newblock {\em Fractional Calculus and Applied Analysis}, 20, 2015.

\bibitem{LA-modern-potential-theory}
N.~Landkof.
\newblock {Foundations of Modern Potential Theory}.
\newblock Springer-Verlag, Berlin-Heidelberg-New York, first edition, 1972.
\newblock Translated from the Russian by A. P. Doohovskoy.

\bibitem{LS-shearwavesplitting-and-mantle-anisotropy}
M.~D. Long and P.~G. Silver.
\newblock {Shear Wave Splitting and Mantle Anisotropy: Measurements,
  Interpretations, and New Directions}.
\newblock {\em Surveys in Geophysics}, 30(4):407--461, 2009.

\bibitem{MP-wave-propagation-anisotropic-media}
V.~Maupin and J.~Park.
\newblock {Theory and Observations -- Wave Propagation in Anisotropic Media}.
\newblock {\em Treatise on Geophysics}, 1:289--321, 2007.

\bibitem{ML-strongly-elliptic-systems}
W.~{McLean}.
\newblock {Strongly Elliptic Systems and Boundary Integral Equations}.
\newblock {Cambridge University Press}, {{F}irst} edition, 2000.

\bibitem{MI:distribution-theory}
D.~{Mitrea}.
\newblock Distributions, {P}artial {D}ifferential {E}quations, and {H}armonic
  {A}nalysis.
\newblock Springer, New York, {F}irst edition, 2013.

\bibitem{MI-potential-theory-euclidean-spaces}
Y.~Mizuta.
\newblock {Potential theory in Euclidean spaces}.
\newblock GAKUTO International Series, Mathematical Sciences and Applications,
  volume 6, Gakk\=otosho, Tokyo, 1996.

\bibitem{MSU-geodesic-ray-transform-riemannian-surfaces}
F.~Monard, P.~Stefanov, and G.~Uhlmann.
\newblock {The Geodesic Ray Transform on Riemannian Surfaces with Conjugate
  Points}.
\newblock {\em Communications in Mathematical Physics}, 337(3):1491--1513,
  2015.

\bibitem{NA-mathematics-computerized-tomography}
F.~{Natterer}.
\newblock {The Mathematics of Computerized Tomography}.
\newblock SIAM, Philadelphia, 2001.
\newblock Reprint.

\bibitem{QU-singularities-x-ray-transform-limited-data}
E.~Quinto.
\newblock {Singularities of the X-Ray Transform and Limited Data Tomography in
  $\mathbb{R}^2$ and $\mathbb{R}^3$}.
\newblock {\em SIAM Journal on Mathematical Analysis}, 24(5):1215--1225, 1993.

\bibitem{QU-introduction-x-ray-tomography-radon}
E.~Quinto.
\newblock {An Introduction to X-ray tomography and Radon Transforms}.
\newblock {\em Proceedings of Symposia in Applied Mathematics}, 63:1--23, 2006.

\bibitem{QU-artifacts-and-singularities-limited-tomography}
E.~Quinto.
\newblock Artifacts and {V}isible {S}ingularities in {L}imited {D}ata {X}-{R}ay
  {T}omography.
\newblock {\em Sensing and Imaging}, 18, 2017.

\bibitem{RK-radon-transform-an-local-tomography}
A.~G. {Ramm} and A.~I. {Katsevich}.
\newblock {The Radon Transform and Local Tomography}.
\newblock CRC Press, Boca Raton, {F}irst edition, 1996.

\bibitem{RI-liouville-riemann-integrals-potentials}
M.~Riesz.
\newblock Int{\'e}grales de {R}iemann-{L}iouville et potentiels.
\newblock {\em Acta Sci. Math. Szeged}, 9(1-1):1--42, 1938.

\bibitem{RU:functional-analysis}
W.~{Rudin}.
\newblock {Functional Analysis}.
\newblock McGraw-Hill, {S}econd edition, 1991.

\bibitem{RU-unique-continuation-scrodinger-rough-potentials}
A.~R\"{u}land.
\newblock Unique continuation for fractional {S}chr\"{o}dinger equations with
  rough potentials.
\newblock {\em Comm. Partial Differential Equations}, 40(1):77--114, 2015.

\bibitem{RS-fractional-calderon-low-regularity-stability}
A.~{R{\"u}land} and M.~{Salo}.
\newblock {The fractional Calder\'on problem: Low regularity and stability}.
\newblock {\em Nonlinear Analysis}, 2019.

\bibitem{SA:fourier-analysis-distributions}
M.~{Salo}.
\newblock Fourier analysis and distribution theory.
\newblock 2013.
\newblock Lecture notes.

\bibitem{SA-hypersingular-integrals}
S.~G. Samko.
\newblock {Hypersingular Integrals and Their Applications}.
\newblock CRC-Press, London and New York, first edition, 2001.

\bibitem{SA-seismic-anisotropy-and-mantle-deformation}
M.~K. Savage.
\newblock Seismic anisotropy and mantle deformation: What have we learned from
  shear wave splitting?
\newblock {\em Reviews of Geophysics}, 37(1):65--106, 1999.

\bibitem{SHA-integral-geometry-tensor-fields}
V.~A. Sharafutdinov.
\newblock {\em Integral geometry of tensor fields}.
\newblock Inverse and Ill-posed Problems Series. VSP, Utrecht, 1994.

\bibitem{SHE-introduction-to-seismology}
P.~M. Shearer.
\newblock {\em {Introduction to Seismology}}.
\newblock Cambridge University Press, 3rd edition, 2019.

\bibitem{SU:microlocal-analysis-integral-geometry}
P.~{Stefanov} and G.~{Uhlmann}.
\newblock Microlocal {A}nalysis and {I}ntegral {G}eometry (working title).
\newblock 2018.
\newblock Draft version.

\bibitem{SUV-local-invertibility-on-tensors}
P.~Stefanov, G.~Uhlmann, and A.~Vasy.
\newblock {Inverting the local geodesic X-ray transform on tensors}.
\newblock {\em Journal d'Analyse Math{\'e}matique}, 136(1):151--208, 2018.

\bibitem{SUVZ-travel-time-tomography}
P.~Stefanov, G.~Uhlmann, A.~Vasy, and H.~Zhou.
\newblock Travel {T}ime {T}omography.
\newblock {\em Acta Mathematica Sinica, English Series}, 35:1085--1114, 2019.

\bibitem{TRE:topological-vector-spaces-distributions}
F.~{Tr\`eves}.
\newblock Topological {V}ector {S}paces, {D}istributions and {K}ernels.
\newblock Academic Press, {F}irst edition, 1967.

\bibitem{UV-local-geodesic-ray-transform-inversion}
G.~Uhlmann and A.~Vasy.
\newblock The inverse problem for the local geodesic ray transform.
\newblock {\em Invent. Math.}, 205(1):83--120, 2016.

\bibitem{UZ-journey-to-the-center-of-the-earth}
G.~{Uhlmann} and H.~{Zhou}.
\newblock {Journey to the {C}enter of the {E}arth}.
\newblock 2016.
\newblock {arXiv:1604.00630}.

\bibitem{WZ-kinematic-problem}
E.~{Wiechert} and K.~{Zoeppritz}.
\newblock {\"U}eber {E}rdbebenwellen.
\newblock {\em Nachrichten von der Gesellschaft der Wissenschaften zu
  Göttingen, Mathematisch-Physikalische Klasse}, 4:415--549, 1907.

\bibitem{YYJW-high-order-TV-minimization}
J.~Yang, H.~Yu, M.~Jiang, and G.~Wang.
\newblock High-order total variation minimization for interior tomography.
\newblock {\em Inverse Problems}, 26(3):035013, 2010.

\bibitem{YA-higher-order-laplacian}
R.~{Yang}.
\newblock {On higher order extensions for the fractional {L}aplacian}.
\newblock 2013.
\newblock arXiv:1302.4413.

\bibitem{YYW-interior-reconstruction-limited-angle-data}
Y.~Ye, H.~Yu, and G.~Wang.
\newblock {Exact Interior Reconstruction from Truncated Limited-Angle
  Projection Data}.
\newblock {\em International Journal of Biomedical Imaging}, vol. 2008, 2008.

\bibitem{YW-compressed-interior-tomography}
H.~Yu and G.~Wang.
\newblock {Compressed sensing based interior tomography}.
\newblock {\em Physics in Medicine and Biology}, 54(9):2791--2805, 2009.

\end{thebibliography}
\bibliographystyle{abbrv}

\end{document}